\newtheorem{teo}{Theorem}
\newtheorem{cor}[teo]{Corollary}
\newtheorem{lema}[teo]{Lemma}
\newcommand{\espan}{\operatorname{span}}
\DeclareMathOperator*{\esup}{ess\,sup}
\DeclareMathOperator*{\einf}{ess\,inf}
\title{{\bf Sampling in $\Lambda$-shift-invariant subspaces of Hilbert-Schmidt operators on $L^2(\mathbb{R}^d)$}}
\author{
{\bf Antonio G. Garc\'{\i}a}\thanks{E-mail:\texttt{agarcia@math.uc3m.es}}
}
\date{}
\begin{document}
\maketitle
\begin{itemize}
\item[] Departamento de Matem\'aticas, Universidad Carlos III de Madrid, Spain.
\end{itemize}
\begin{abstract}
The translation of an operator is defined by using conjugation with time-frequency shifts. Thus, one can define $\Lambda$-shift-invariant subspaces of Hilbert-Schmidt operators, finitely generated, with respect to a lattice $\Lambda$ in $\mathbb{R}^{2d}$. These spaces can be seen as a generalization of classical shift-invariant subspaces of square integrable functions. Obtaining sampling results for these subspaces appears as a natural question that can be motivated by the problem of channel estimation in wireless communications. These sampling results are obtained in the light of the frame theory in a separable Hilbert space.
\end{abstract}
{\bf Keywords}: Hilbert-Schmidt operators; Weyl transform; Kohn-Nirenberg transform; Translation of operators; $\Lambda$-shift-invariant subspaces; Sampling Hilbert-Schmidt operators.

\noindent{\bf AMS}: 42C15; 43A32; 47B10; 94A20.
\section{Introduction}
\label{section1}
In this paper we obtain sampling results in shift-invariant-like subspaces of the class $\mathcal{H}\mathcal{S}(\mathbb{R}^d)$ of Hilbert-Schmidt operators on $L^2(\mathbb{R}^d)$. To be more precise, these subspaces are obtained by translation in a lattice $\Lambda\subset \mathbb{R}^{2d}$ of a fixed  set of {\em Hilbert-Schmidt} operators $S_1, S_2, \dots, S_N$. {\em The translation of an operator} $S$ by $z\in \mathbb{R}^{2d}$ is defined by using conjugation with the {\em time-frequency shift} $\pi(z)$, where $z=(x, \omega)$ belongs to the {\em phase space} $\mathbb{R}^d\times \widehat{\mathbb{R}}^d$ (which in the sequel will be identified with $\mathbb{R}^{2d}$) by  
\[
\alpha_z(S):=\pi (z) S\, \pi(z)^*\,,\quad  z\in \mathbb{R}^{2d}\,. 
\]
Recall that the time-frequency shift acts on $f\in L^2(\mathbb{R}^d)$ as $\pi(z)f(t)={\rm e}^{2\pi i \omega \cdot t} f(t-x)$. The set of translations $\{\alpha_z\}_{z\in \mathbb{R}^{2d}}$ is a {\em unitary representation} of the group $\mathbb{R}^{2d}$ on the Hilbert space $\mathcal{H}\mathcal{S}(\mathbb{R}^d)$.

\medskip

If we take a {\em full rank  lattice} $\Lambda$ in $\mathbb{R}^{2d}$, i.e., $\Lambda=A\mathbb{Z}^d$ where $A$ is a $2d\times 2d$ real invertible matrix, such that the sequence $\{\alpha_\lambda (S_n)\}_{\lambda \in \Lambda;\,n=1,2,\dots,N}$ is a {\em Riesz sequence} in $\mathcal{H}\mathcal{S}(\mathbb{R}^d)$ we consider the subspace of $\mathcal{H}\mathcal{S}(\mathbb{R}^d)$ given by
\[
V_{\bf S}^2=\Big\{\sum_{n=1}^N\sum_{\lambda \in \Lambda} c_n(\lambda)\, \alpha_\lambda(S_n)\,\, :\,\, \{c_n(\lambda)\}_{\lambda \in \Lambda}\in \ell^2(\Lambda)\,,\, n=1, 2, \dots, N \Big\}\,. 
\]
From now on, the subspaces $V_{\bf S}^2$ obtained in this way will be called {\em $\Lambda$-shift-invariant subspaces} in $\mathcal{H}\mathcal{S}(\mathbb{R}^d)$. These spaces are a generalization of the classical shift-invariant subspaces in $L^2(\mathbb{R}^d)$:
\[
V^2_\Phi:=\Big\{\sum_{n=1}^N\sum_{\alpha \in \mathbb{Z}^d} c_n(\alpha)\, \varphi_n(t-\alpha)\,\, :\,\, \{c_n(\alpha)\}_{\alpha \in \mathbb{Z}^d}\in \ell^2(\mathbb{Z}^d)\,,\, n=1, 2, \dots, N \Big\}\,,
\]
where $\Phi=\{\varphi_1, \varphi_2, \dots, \varphi_{_N}\}$ denotes a set of generators of $V^2_\Phi$. Sampling in the shift-invariant subspace $V^2_\Phi$ usually involves, for each $f\in V^2_\Phi$, pointwise samples $\{f(\alpha+\beta_m)\}_{\alpha \in \mathbb{Z}^d}$ and/or average samples $\{\langle f, \psi_m(\cdot-\alpha)\rangle\}_{\alpha \in \mathbb{Z}^d}$, where $\psi_m$ is an {\em average function} in $L^2(\mathbb{R}^d)$, which not necessarily belong to $V_\varphi^2$. Any stable sampling in $V^2_\Phi$ will involve, necessarily, $M\geq N$ sequences of samples (see, for instance, \cite{aldroubi:05,garcia:06} and references therein). 

\medskip

A challenge problem here is to choose an appropriate set of samples  that should be used for operators  in $V_{\bf S}^2$. Inspired in Ref.~\cite{grochenig:14} and motivated by the problem of channel estimation in wireless communications, in this paper we propose for any $T\in V_{\bf S}^2$ its {\em diagonal channel samples} at the lattice 
$\Lambda \subset \mathbb{R}^{2d}$ defined by 
\begin{equation}
\label{samples1}
s_{_{T,m}}(\lambda):=\big\langle \alpha_{_{-\lambda}}(T)g_m, \widetilde{g}_m\big \rangle_{L^2(\mathbb{R}^d)}\,,\quad \lambda \in \Lambda\,,\,\,\, m=1, 2, \dots,M\,,
\end{equation}
where $g_m, \widetilde{g}_m$, $m=1, 2, \dots,M$, are $2M$  fixed functions in $L^2(\mathbb{R}^d)$ (we will see that necessarily $M\geq N$). The name {\em diagonal channel samples} coined for these samples will become clear later on where a little explanation will be done for both, the choice of Hilbert-Schmidt operators (in $V_{\bf S}^2$) to be sampled, and the choice of the above samples for any $T\in V_{\bf S}^2$. As we will see in Section \ref{section3.3} the samples defined in \eqref{samples1} are nothing but the {\em lower symbol of the operator $T$} with respect $g_m, \widetilde{g}_m\in L^2(\mathbb{R}^d)$ and lattice $\Lambda$, i.e., $\big\langle T\pi(\lambda)g_m, \pi(\lambda)\widetilde{g}_m \big\rangle_{L^2(\mathbb{R}^d)}$, $\lambda \in \Lambda$, or the samples of the {\em Berezin transform} $\mathcal{B}^{g_m, \widetilde{g}_m} \,T(z):=\big\langle T\pi(z)g_m, \pi(z)\widetilde{g}_m \big\rangle_{L^2(\mathbb{R}^d)}$, $z\in \mathbb{R}^{2d}$, at the lattice $\Lambda$ (see Ref.~\cite{luef:18}). These samples are also a particular case of the {\em average samples} $\langle T, \alpha_\lambda(Q_m)\big \rangle_{\mathcal{H}\mathcal{S}}$ where the {\em average operator} $Q_m$ is the rank-one operator $\widetilde{g}_m\otimes g_m$; average sampling has been used previously in Refs.~\cite{feichtinger:02,garcia:21}.

\medskip

The main aim here is the stable recovery of any $T\in V_{\bf S}^2$ from its samples \eqref{samples1} by means of a sampling formula in $V_{\bf S}^2$ having the form
\[
T=\sum_{m=1}^M \sum_{\lambda \in \Lambda}s_{_{T,m}}(\lambda)\, \alpha_\lambda(H_m) \quad \text{in $\mathcal{H}\mathcal{S}$-norm}\,,
\]
for each $T\in V_{\bf S}^2$. The operators $H_m$, $m=1,2, \dots,M$, above belong to $V_{\bf S}^2$ and satisfy that the sequence $\{\alpha_\lambda (H_m)\}_{\lambda \in \Lambda;\, m=1,2, \dots,M}$ is a {\em frame} for the Hilbert space $V_{\bf S}^2$.

\medskip

For sampling in classical shift-invariant spaces see, for instance, Refs.~\cite{aldroubi:05,garcia:06,garcia:08} and references therein. See also Ref.~\cite{hector:14} for the case where other unitary representation of $\mathbb{R}$ on $L^2(\mathbb{R})$  is used instead of the classical one given by translations. For the less known topic on sampling operators, see Refs.~\cite{feichtinger:02,garcia:21,grochenig:14,krahmer:14,pfander:13,pfander:16}.

\medskip

The used techniques in this work are those of the frame theory in a separable Hilbert space. To be precise, the  samples used along this paper will be expressed as a discrete convolution system  in the product Hilbert space $\ell^2_{_N}(\Lambda):=\ell^2(\Lambda)\times \dots \times \ell^2(\Lambda)$ ($N$ times), and then it will be used the close relationship between a discrete convolution system and a sequence of translates in $\ell^2_{_N}(\Lambda)$ (see, for instance, Ref.~\cite{garcia:20}). The other involved tools are the Kohn-Nirenberg transform or the Weyl transform for Hilbert-Schmidt operators: both are unitary operators from $L^2(\mathbb{R}^{2d})$ onto $\mathcal{H}\mathcal{S}(\mathbb{R}^d)$ which respect the translations in the sense that, if we denote any of them by $\mathcal{L}$, we have $\mathcal{L}(T_zf)=\alpha_z(\mathcal{L}f)$ for $f\in L^2(\mathbb{R}^{2d})$ and $z\in \mathbb{R}^{2d}$.

\medskip

Now we briefly explain a practical motivation for considering  the samples defined in Eq.~\eqref{samples1} for the elements in $V_{\bf S}^2$. It is a well-known fact in mobile wireless channels that the relative location between transmitter and receiver is varying with time and consequently the input-output relation is modeled by a {\em time-varying system} $x\mapsto Hx$ that can be expressed as the integral operator 
\[
Hx(t)=\int_{\mathbb{R}^d} h_t(s)\, x(t-s)\, ds=\int_{\mathbb{R}^d} \sigma (t, \omega)\, \widehat{x}(\omega)\,{\rm e}^{2\pi i \,\omega \cdot t} d\omega\,,
\]
where $\sigma(t, \omega)=\mathcal{F}(h_t)(\omega)$, i.e., the Fourier transform with respect to the last $d$ variables in $h(t,s):=h_t(s)$. In this last formulation, operator $H$ becomes a {\em pseudodifferential operator} with {\em Kohn-Nirenberg symbol} $\sigma$ (see, for instance, Refs.~\cite{grochenig:01,strohmer:06}). 

\medskip

As it was pointed out in Ref.~\cite{grochenig:14}, in {\em orthogonal frequency-division multiplexing} (OFDM) the digital information, i.e., a sequence of numbers
$\{c_\lambda\}$, $\lambda$ in the lattice $\Lambda=a\mathbb{Z}^d\times b\mathbb{Z}^d$ $(a, b>0)$, is used as the coefficients of the input signal $x(t)=\sum_{\mu \in \Lambda} c_\mu \,\pi(\lambda)g(t)$ of a time-varying system $H$ producing the output $y(t)=Hx(t)$. Then, the sequence of numbers  
\begin{equation}
\label{channel}
d_\lambda=\big\langle y,  \pi(\lambda)\widetilde{g}\, \big\rangle_{L^2(\mathbb{R}^d)} = \sum_{\mu \in \Lambda} c_\mu \,\big\langle H\pi(\mu)g, \pi(\lambda)\widetilde{g}\, \big\rangle_{L^2(\mathbb{R}^d)}\,,\quad \lambda \in \Lambda\,,
\end{equation}
is considered. The main task of the engineer is to recover the original data $\{c_\lambda\}$ from the received data $\{d_\lambda\}$. The matrix $A=[a_{\lambda, \mu}]$, where $a_{\lambda, \mu}=\big\langle H\pi(\mu)g, \pi(\lambda)\widetilde{g}\, \big\rangle_{L^2(\mathbb{R}^d)}$, which appears in Eq.~\eqref{channel}, involving $H$ and the time-frequency shifts of a pair of fixed functions $g, \widetilde{g}\in L^2(\mathbb{R}^d)$, is the so-called {\em channel matrix} associated with $H$ and the functions $g, \widetilde{g}$ in $L^2(\mathbb{R}^d)$. As it will be proved in Section \ref{section3.3} (see Eq.~\eqref{eledos} below), we have that
\[
\big\langle H\pi(\lambda)g, \pi(\lambda)\widetilde{g}\, \big\rangle_{L^2(\mathbb{R}^d)}=\big\langle \alpha_{-\lambda}(H)g, \widetilde{g}\,\big \rangle_{L^2(\mathbb{R}^d)}\,, \quad \lambda \in \Lambda\,,
\]
i.e., the samples $\big\langle \alpha_{-\lambda}(H)g, \widetilde{g}\,\big \rangle_{L^2(\mathbb{R}^d)}$, $\lambda \in \Lambda$,  coincide with the diagonal entries of the channel matrix associated with $H$ and windows $g, \widetilde{g}$. This is the reason to consider the samples defined in Eq.~\eqref{samples1} and to name  them as the {\em diagonal channel samples} of the operator $H$ with respect to the fixed functions $g, \widetilde{g}\in L^2(\mathbb{R}^d)$ and lattice $\Lambda$.

\medskip

Besides, a simple class of operators $H$ describing time-varying systems, and allowing to live in the Hilbert space setting, is given by the class of Hilbert-Schmidt operators on $L^2(\mathbb{R}^d)$. A Hilbert-Schmidt operator $H$ on $L^2(\mathbb{R}^d)$ is a compact operator on $L^2(\mathbb{R}^d)$ having the integral representation 
\[
Hx(t)=\int_{\mathbb{R}^d} \kappa(t,s)\, x(s) ds=\int_{\mathbb{R}^d} \kappa(t,t-s)\, x(t-s) ds\,,
\]
with kernel $\kappa\in L^2(\mathbb{R}^{2d})$.  Although only Hilbert-Schmidt operators on $L^2(\mathbb{R}^d)$ can be described as  integral operators with kernel in $L^2(\mathbb{R}^{2d})$, every bounded operator on $L^2(\mathbb{R}^d)$ can be uniquely described, via the {\em Schwartz kernel theorem}, by a distributional kernel in $\mathcal{S}'(\mathbb{R}^{2d})$ (see, for instance, Ref.~\cite{grochenig:01}).

\medskip

The paper is organized as follows: Section \ref{section2} introduces, for the sake of completeness, some preliminaries needed in the sequel; they  comprise Hilbert-Schmidt operators and their Kohn-Nirenberg and Weyl transforms, the concept of translation of an operator, and {\em symplectic Fourier series}. For the theory of bases and frames in a Hilbert space we cite Ref.~\cite{ole:16}. Section \ref{section3} contains the main sampling results for the multiple generated subspace $V_{\bf S}^2$ of $\mathcal{H}\mathcal{S}(\mathbb{R}^d)$. They rely on the expression of the involved samples as the output of a bounded discrete convolution system $\ell^2_{_N}(\Lambda) \rightarrow \ell^2_{_M}(\Lambda)$, and its relationship with a frame of translates for $\ell^2_{_N}(\Lambda)$.
\section{Some preliminaries}
\label{section2}
Next we briefly introduce some mathematical tools used throughout the work. For the needed theory of bases and frames in a Hilbert space we merely make reference to \cite{ole:16}; it mainly comprises Riesz sequences, dual Riesz bases and frames and its duals in a separable Hilbert space. The results for discrete convolution systems and their relationship with frames of translates in $\ell^2_{_N}(\Lambda)$ can be found, for instance, in Ref.~\cite{garcia:20}.
\subsubsection*{The Kohn-Nirenberg and Weyl transforms in the class of Hilbert-Schmidt operators}
The class of Hilbert-Schmidt operators in a Hilbert space, $L^2(\mathbb{R}^d)$ in our case, can be introduced by using the {\em Schmidt decomposition} (singular value decomposition) of a compact operator on $L^2(\mathbb{R}^d)$ (see, for instance, Ref.~\cite{conway:00}). Namely,  for a compact operator $S$ on $L^2(\mathbb{R}^d)$ there exist two orthonormal sequences $\{x_n\}_{n\in \mathbb{N}}$ and $\{y_n\}_{n\in \mathbb{N}}$ in $L^2(\mathbb{R}^d)$ and a bounded sequence of positive numbers $\{s_n(S)\}_{n\in \mathbb{N}}$ ({\em singular values} of $S$) such that 
\[
S=\sum_{n\in \mathbb{N}} s_n(S)\, x_n \otimes y_n\,,
\]
with convergence of the series in the operator norm. Here,  $x_n \otimes y_n$ denotes the rank-one operator defined by $\big(x_n \otimes y_n\big)(e)=\big\langle e, y_n\big\rangle_{L^2} x_n$ for $e\in L^2(\mathbb{R}^d)$. For $1\le p<\infty$  we define the {\em Schatten-$p$ class} $\mathcal{T}^p$ by
\[
\mathcal{T}^p:=\big\{ S \,\text{ compact on $L^2(\mathbb{R}^d)$}\, \,:\,\, \{s_n(S)\}_{n\in \mathbb{N}} \in \ell^p(\mathbb{N})\big\}\,.
\]
The Schatten-$p$ class $\mathcal{T}^p$ is a Banach space endowed with the norm $\|S\|_{\mathcal{T}^p}^p=\sum_{n\in \mathbb{N}} s_n^p(S)$.

\medskip 

In particular, for $p=1$ we obtain the so-called {\em trace class operators $\mathcal{T}^1$}. The {\em trace} defined by ${\rm tr}(S)=\sum_{n\in \mathbb{N}} \langle Se_n, e_n\rangle_{L^2}$ is a well-defined bounded linear functional on $\mathcal{T}^1$, and independent of the used orthonormal basis $\{e_n\}_{n\in \mathbb{N}}$ in $L^2(\mathbb{R}^d)$. 

\medskip

For $p=2$ we obtain the class of {\em Hilbert-Schmidt operators} $\mathcal{H}\mathcal{S}(\mathbb{R}^d):=\mathcal{T}^2$. The space $\mathcal{H}\mathcal{S}(\mathbb{R}^d)$ endowed with the inner product $\big\langle S, T \big\rangle_{\mathcal{H}\mathcal{S}}={\rm tr}(ST^*)$ becomes a Hilbert space. For the norm of $S\in \mathcal{H}\mathcal{S}(\mathbb{R}^d)$ we have
\[
\|S\|_{\mathcal{H}\mathcal{S}}^2={\rm tr}(SS^*)=\sum_{n\in \mathbb{N}}\|S^*(e_n)\|^2_{L^2}=\sum_{n\in \mathbb{N}}\|S(e_n)\|^2_{L^2}=\sum_{n\in \mathbb{N}} s_n^2(S)\,.
\]
A Hilbert-Schmidt operator $S\in \mathcal{H}\mathcal{S}(\mathbb{R}^d)$ can be seen also as a compact operator on $L^2(\mathbb{R}^d)$ defined for each $f\in L^2(\mathbb{R}^d)$ by
\[
Sf(t)=\int_{\mathbb{R}^d} \kappa_{_S}(t,x) f(x) dx\quad \text{ a.e. $t\in \mathbb{R}^d$}\,,
\]
with kernel $\kappa_{_S} \in L^2(\mathbb{R}^{2d})$. Besides, $\big\langle S, T \big\rangle_{\mathcal{H}\mathcal{S}}=\big\langle \kappa_{_S}, \kappa_{_T}\big\rangle_{L^2(\mathbb{R}^{2d})}$\, for $S, T\in \mathcal{H}\mathcal{S}(\mathbb{R}^d)$.

\medskip

Now, we briefly introduce the Kohn-Nirenberg and Weyl transforms in $L^2(\mathbb{R}^{2d})$, the setting where they will be used in this paper. More information and details about these transforms, also valid in more general settings, can be found in Refs.~\cite{feichtinger:07,folland:89,grochenig:01,skret:20,werner:84}.

\medskip

The {\em Kohn-Nirenberg transform} $L^2(\mathbb{R}^{2d}) \ni \sigma \longmapsto K_\sigma \in \mathcal{H}\mathcal{S}(\mathbb{R}^d)$ is a unitary operator where 
$K_\sigma: L^2(\mathbb{R}^d) \rightarrow L^2(\mathbb{R}^d)$ is the Hilbert-Schmidt operator defined in weak sense by
\begin{equation}
\label{KN}
\big\langle K_\sigma \phi, \psi \big\rangle_{L^2(\mathbb{R}^d)}=\big\langle \sigma, R(\psi, \phi)\big\rangle_{L^2(\mathbb{R}^{2d})}\,, \quad \phi, \psi \in L^2(\mathbb{R}^d)\,;
\end{equation}
here
\[
R(\psi, \phi)(x,\omega )=\psi(x)\, \overline{\widehat{\phi}(\omega)}\,{\rm e}^{-2\pi i \,{x \cdot \omega}}\,,\quad (x,\omega ) \in \mathbb{R}^{2d}\,,
\]
is the {\em Rihaczek distribution} of the functions $\psi, \phi \in L^2(\mathbb{R}^d)$ (see \cite[Theorem 14.6.1]{grochenig:01}).

\medskip

Thus, for each operator $S\in \mathcal{H}\mathcal{S}(\mathbb{R}^d)$ there exists a unique function $\sigma_{_S}\in L^2(\mathbb{R}^{2d})$, called its {\em Kohn-Nirenberg symbol}, i.e. $S=K_{\sigma_{_S}}$, and  such that 
\[
\langle S, T\rangle_{\mathcal{H}\mathcal{S}}=\langle \sigma_{_S}, \sigma_{_T} \rangle_{L^2(\mathbb{R}^{2d})}\quad \text{for each $S, T \in \mathcal{H}\mathcal{S}(\mathbb{R}^d)$}\,.
\]

{\em The Weyl transform} $L^2(\mathbb{R}^{2d}) \ni f\longmapsto L_f \in \mathcal{H}\mathcal{S}(\mathbb{R}^d)$ is also a unitary operator where 
$L_f: L^2(\mathbb{R}^d) \rightarrow L^2(\mathbb{R}^d)$ is the Hilbert-Schmidt operator defined in weak sense by
\begin{equation}
\label{W}
\big\langle L_f \phi, \psi \big\rangle_{L^2(\mathbb{R}^d)}=\big\langle f, W(\psi, \phi)\big\rangle_{L^2(\mathbb{R}^{2d})}\,, \quad \phi, \psi \in L^2(\mathbb{R}^d)\,;
\end{equation}
here
\[
W(\psi, \phi)(x,\omega )=\int_{\mathbb{R}^d}\psi\big(x+\frac{t}{2}\big)\, \overline{\phi\big(x-\frac{t}{2}\big)}\,{\rm e}^{-2\pi i \,{\omega \cdot t}}dt\,,\quad (x,\omega ) \in \mathbb{R}^{2d}\,,
\]
is the {\em cross-Wigner distribution} of the functions $\psi, \phi \in L^2(\mathbb{R}^d)$ (see Ref.~\cite[Theorem 14.6.1]{grochenig:01}).

\medskip

Thus, for each operator $S\in \mathcal{H}\mathcal{S}(\mathbb{R}^d)$ there exists a unique function $a_{_S}\in L^2(\mathbb{R}^{2d})$, called its {\em Weyl symbol}, i.e. $S=L_{a_{_S}}$, and  such that 
\[
\langle S, T\rangle_{\mathcal{H}\mathcal{S}}=\langle a_{_S}, a_{_T} \rangle_{L^2(\mathbb{R}^{2d})}\quad \text{for each $S, T \in \mathcal{H}\mathcal{S}(\mathbb{R}^d)$}\,.
\]
If $a_{_S}$ denotes the Weyl symbol  of $S$, its Kohn-Nirenberg symbol $\sigma_{_S}$ is given by $Ua_{_S}$ where $U$ is the unitary operator on $L^2(\mathbb{R}^{2d})$ such that $\widehat{Ua_{_S}}(\xi, u)={\rm e}^{\pi i u\cdot\xi}\, \widehat{a}_{_S}(\xi, u)$, $(\xi, u)\in \mathbb{R}^{2d}$ (see the details in Ref.~\cite{grochenig:01}).

The Kohn-Nirenberg (or Weyl) transform can be defined for $\sigma$ (or $f$) in $\mathcal{S}'(\mathbb{R}^{2d})$, i.e., for tempered distributions by using the dualities 
$\big(\mathcal{S}(\mathbb{R}^d), \mathcal{S}'(\mathbb{R}^d)\big)$ and $\big(\mathcal{S}(\mathbb{R}^{2d}), \mathcal{S}'(\mathbb{R}^{2d})\big)$ in Eq.~\eqref{KN} (or Eq.~\eqref{W}); see, for instance, Refs.~\cite{grochenig:01,skret:20}.
\subsubsection*{Translation of operators}
For $z=(x, \omega) \in \mathbb{R}^{2d}$, the {\em time-frequency shift} operator $\pi(z): L^2(\mathbb{R}^d) \rightarrow L^2(\mathbb{R}^d)$ is defined as 
\[
\pi(z) \varphi(t)={\rm e}^{2\pi i \omega\cdot t}\varphi(t-x)\quad \text{for $\varphi \in L^2(\mathbb{R}^d)$}\,.
\]
It is used to define the {\em short-time Fourier transform} (Gabor transform) $V_\psi\varphi$ of $\varphi$ with window $\psi$, both in $L^2(\mathbb{R}^d)$, by
\[
V_\psi\varphi(z)=\big\langle \varphi, \pi(z)\psi \big\rangle_{L^2(\mathbb{R}^d)}\,,\quad z \in \mathbb{R}^{2d}\,.
\]
Its adjoint operator is $\pi(z)^*={\rm e}^{-2\pi i x\cdot \omega}\, \pi(-z)$ for $z=(x, \omega)\in \mathbb{R}^{2d}$.
By using conjugation with $\pi(z)$ one can define the translation by $z\in \mathbb{R}^{2d}$ of an operator $S \in \mathcal{H}\mathcal{S}(\mathbb{R}^d)$. Namely,
\[
\alpha_z(S):=\pi(z)\,S\,\pi(z)^*\,,\quad z \in \mathbb{R}^{2d}\,.
\]
For instance, for $\varphi, \psi \in L^2(\mathbb{R}^d)$ we get $\alpha_z(\varphi \otimes \psi)=[\pi(z)\varphi] \otimes [\pi(z)\psi]$, \,$z \in \mathbb{R}^{2d}$.

Since $\alpha_z$ defines a unitary operator on $\mathcal{H}\mathcal{S}(\mathbb{R}^d)$, $\alpha_z \alpha_{z'}=\alpha_{z+z'}$ for $z, z' \in \mathbb{R}^{2d}$, and the map $z\mapsto \alpha_z(S)$ is continuous for each $S\in \mathcal{H}\mathcal{S}(\mathbb{R}^d)$ we have that  
$\big\{\alpha_z\big\}_{z\in \mathbb{R}^{2d}}$ is a {\em unitary representation} of the group $\mathbb{R}^{2d}$ on the Hilbert space 
$\mathcal{H}\mathcal{S}(\mathbb{R}^d)$. More properties and applications can be found, for instance, in Refs.~\cite{luef:18,skret:20, werner:84}.

\subsubsection*{Symplectic Fourier series}
Let $\Lambda$ be a {\em full rank lattice} in $\mathbb{R}^{2d}$, i.e., $\Lambda=A\mathbb{Z}^{2d}$ with $A\in GL(2d,\mathbb{R})$ and volume $|\Lambda|=\det A$. Its dual group $\widehat{\Lambda}$ is identified with $\mathbb{R}^{2d}/\Lambda^\circ$, where $\Lambda^\circ$ is the {\em annihilator group}
\[
\Lambda^\circ=\big\{\lambda^\circ \in \mathbb{R}^{2d}\, \, :\, \, {\rm e}^{2\pi i\, \sigma(\lambda^\circ , \lambda)}=1\,\, \text{ for all $\lambda \in \Lambda$} \big\}\,,
\]
where $\sigma$ denotes here the {\em standard symplectic form} $\sigma (z,z')=\omega\cdot x'-\omega'\cdot x$ \, for $z=(x, \omega)$ and $z'=(x',\omega')$ in $\mathbb{R}^{2d}$. Notice that, since $\Lambda$ is discrete its dual group $\widehat{\Lambda}$ is compact. The group $\Lambda^\circ$ is itself a lattice: the so-called {\em adjoint lattice} of $\Lambda$. The {\em symplectic characters} $\chi_z(z'):={\rm e}^{2\pi i \,\sigma(z,z')}$ are the natural way of identifying the group 
$\mathbb{R}^{2d}$ with its dual group via the bijection $z\mapsto \chi_z$. 

\medskip

The  Fourier transform of $c\in \ell^1(\Lambda)$ is the {\em symplectic Fourier series}
\[
\mathcal{F}_s^\Lambda(c)(\dot z):=\sum_{\lambda \in \Lambda} c(\lambda)\, {\rm e}^{2\pi i \,\sigma(\lambda, z)}\,, \quad \dot z \in \mathbb{R}^{2d}/\Lambda^\circ\,,
\]
where $\dot z$ denotes the image of $z$ under the natural quotient map $\mathbb{R}^{2d} \rightarrow \mathbb{R}^{2d}/\Lambda^\circ$.

\medskip

Since $\mathcal{F}_s^\Lambda$ is a Fourier transform it extends to a unitary mapping $\mathcal{F}_s^\Lambda : \ell^2(\Lambda) \rightarrow L^2(\,\widehat{\Lambda}\,)$. It satisfies $\mathcal{F}_s^\Lambda(c\ast_\Lambda d)=\mathcal{F}_s^\Lambda(c)\,\mathcal{F}_s^\Lambda(d)$ for $c\in \ell^1(\Lambda)$ and $d\in \ell^2(\Lambda)$. Moreover, if $c, d \in \ell^2(\Lambda)$ with $c\ast_\Lambda d \in \ell^2(\Lambda)$, then $\mathcal{F}_s^\Lambda(c\ast_\Lambda d)=\mathcal{F}_s^\Lambda(c)\,\mathcal{F}_s^\Lambda(d)$. As usual, the convolution $\ast_\Lambda$ of two sequences $c, d$  is defined by
\[
\big(c \ast_\Lambda d\big)(\lambda)=\sum_{\mu\in \Lambda} c(\mu)\, d(\lambda-\mu),\quad \lambda \in \Lambda\,.
\]
For more details, see, for instance, Refs.~\cite{deitmar:14,folland:95, fuhr:05,skret:20}.

\section{Sampling in the case of multiple generators}
\label{section3}
For a fixed set ${\bf S}=\{S_1, S_2, \dots, S_N\} \subset \mathcal{H}\mathcal{S}(\mathbb{R}^d)$, we are interested that  the sequence of translates $\{\alpha_\lambda (S_n)\}_{\lambda \in \Lambda;\, n=1,2,\dots,N}$ forms a Riesz sequence for $\mathcal{H}\mathcal{S}(\mathbb{R}^d)$ where $\Lambda \subset \mathbb{R}^{2d}$ is a full rank lattice with dual group $\widehat{\Lambda}$. 
\subsection{Riesz sequences of translated operators in $\mathcal{H}\mathcal{S}(\mathbb{R}^d)$}
As it was said before, the Weyl transform $f\mapsto L_f$ is a unitary operator $L^2(\mathbb{R}^{2d}) \rightarrow \mathcal{H}\mathcal{S}(\mathbb{R}^d)$ which respects translations in the sense that
\[
L_{T_zf}=\alpha_z(L_f)\quad \text{for $f\in L^2(\mathbb{R}^{2d})$ and $z \in \mathbb{R}^{2d}$}\,.
\]
These two properties are very important throughtout this work. In particular, as it was pointed out in Refs.~\cite{feichtinger:02,skret:20}, for fixed $S\in \mathcal{H}\mathcal{S}(\mathbb{R}^d)$ with Weyl symbol $a_{_S} \in L^2(\mathbb{R}^{2d})$ and lattice $\Lambda$ in $\mathbb{R}^{2d}$, the sequence 
$\{\alpha_\lambda (S)\}_{\lambda \in \Lambda}$ is a Riesz sequence in $\mathcal{H}\mathcal{S}(\mathbb{R}^d)$, i.e., a Riesz basis for $V_S^2:=\overline{\espan}_{\mathcal{H}\mathcal{S}}\big\{ \alpha_\lambda(S)\big\}_{\lambda \in \Lambda}$, if and only if the sequence $\{T_\lambda (a_{_S})\}_{\lambda \in \Lambda}$ is a Riesz sequence in $L^2(\mathbb{R}^{2d})$, i.e., a Riesz basis for the shift-invariant subspace $V_{a_S}^2$ in $L^2(\mathbb{R}^{2d})$ generated by $a_{_S}$. 

A necessary and sufficient condition for $\{\alpha_\lambda (S)\}_{\lambda \in \Lambda}$ to be a Riesz sequence in $\mathcal{H}\mathcal{S}(\mathbb{R}^d)$ 
is given in Ref.~\cite{skret:20}. There, it is assumed that $S\in \mathcal{B}$, a Banach space of continuous operators with Weyl symbol $a_{_S}$ in the {\em Feichtinger's algebra} $\mathcal{S}_0(\mathbb{R}^{2d})$; in essence, 
$\mathcal{B}$ consists of trace class operators on $L^2(\mathbb{R}^d)$ with a norm-continuous inclusion 
$\iota : \mathcal{B} \hookrightarrow \mathcal{T}^1$  (see the details in Refs.~\cite{grochenig:99,skret:20}). 

\medskip

Recall that the {\em Feichtinger's algebra} $\mathcal{S}_0(\mathbb{R}^d)$ is the space of all tempered distributions $\psi$ in $\mathbb{R}^d$ such that 
\[
\|\psi\|_{\mathcal{S}_0}:=\int_{\mathbb{R}^{2d}} |V_{\varphi_0}\psi(z)|dz <\infty\,,
\]
where $\varphi_0$ denotes the $L^2$-normalized gaussian $\varphi_0(x)=2^{d/4} {\rm e}^{-\pi x\cdot x}$ for $x\in \mathbb{R}^d$. With this norm, 
$\mathcal{S}_0(\mathbb{R}^d)$ is a Banach space of continuous functions and an algebra under multiplication and convolution; see the details in Refs.~\cite{grochenig:01,jakobsen:18,skret:20}.

\begin{teo}(\cite[Theorem 6.1]{skret:20})
Let $\Lambda$ be a lattice and $S\in \mathcal{B}$. The sequence $\{\alpha_\lambda (S)\}_{\lambda \in \Lambda}$ is a Riesz sequence in $\mathcal{H}\mathcal{S}(\mathbb{R}^d)$ if and only if the function
\[
P_{\Lambda^\circ}\big(|\mathcal{F}_W(S)|^2\big)(\dot z):=\frac{1}{|\Lambda|}\sum_{\lambda^\circ \in \Lambda^\circ} |\mathcal{F}_W(S)(z+\lambda^\circ)|^2\,,\quad z\in \mathbb{R}^{2d}\,,
\] 
has no zeros in $\widehat{\Lambda}$.
\end{teo}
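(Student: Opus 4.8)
The plan is to transport the question to the shift-invariant subspace of $L^2(\mathbb{R}^{2d})$ generated by the Weyl symbol of $S$, and then to invoke the classical fiberization of Riesz sequences of lattice translates, finally using the extra regularity carried by $\mathcal{B}$ to collapse the two-sided Riesz bounds to the stated no-zeros condition. First I would perform the reduction. Since the Weyl transform $f\mapsto L_f$ is unitary from $L^2(\mathbb{R}^{2d})$ onto $\mathcal{H}\mathcal{S}(\mathbb{R}^d)$ and satisfies $L_{T_z f}=\alpha_z(L_f)$, writing $S=L_{a_{S}}$ with Weyl symbol $a_{S}$ gives $\alpha_\lambda(S)=L_{T_\lambda a_{S}}$. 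A unitary map sends Riesz sequences to Riesz sequences with the same bounds, so $\{\alpha_\lambda(S)\}_{\lambda\in\Lambda}$ is a Riesz sequence in $\mathcal{H}\mathcal{S}(\mathbb{R}^d)$ if and only if $\{T_\lambda a_{S}\}_{\lambda\in\Lambda}$ is a Riesz sequence in $L^2(\mathbb{R}^{2d})$. This is immediate from the two displayed properties of $L$ recalled just before the statement.

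Next I would carry out the standard fiberization. For $c\in\ell^2(\Lambda)$ one computes, by periodizing over the adjoint lattice $\Lambda^\circ$, that
\[
\Big\|\sum_{\lambda\in\Lambda} c(\lambda)\,T_\lambda a_{S}\Big\|_{L^2(\mathbb{R}^{2d})}^2=\int_{\widehat{\Lambda}}\big|\mathcal{F}_s^\Lambda(c)(\dot z)\big|^2\,G_S(\dot z)\,d\dot z\,,
\]
where $G_S(\dot z):=\frac{1}{|\Lambda|}\sum_{\lambda^\circ\in\Lambda^\circ}|\mathcal{F}_W(S)(z+\lambda^\circ)|^2$ is exactly the periodization in the statement. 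Here I use that $\mathcal{F}_W(S)$ is the symplectic Fourier transform of $a_{S}$, so that $|\mathcal{F}_W(S)|^2$ plays the role of $|\widehat{a_{S}}|^2$ and the dual lattice attached to the symplectic characters $\chi_z$ is precisely $\Lambda^\circ$. Because $\mathcal{F}_s^\Lambda:\ell^2(\Lambda)\to L^2(\widehat{\Lambda})$ is unitary, the two-sided Riesz inequality $A\|c\|_2^2\le\|\sum_\lambda c(\lambda)T_\lambda a_{S}\|^2\le B\|c\|_2^2$ for all $c$ is equivalent to $A\le G_S(\dot z)\le B$ for a.e.\ $\dot z\in\widehat{\Lambda}$; that is, $\{T_\lambda a_{S}\}$ is a Riesz sequence if and only if $G_S$ is essentially bounded above and essentially bounded below by a positive constant.

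Finally I would use the hypothesis $S\in\mathcal{B}$, i.e.\ $a_{S}\in\mathcal{S}_0(\mathbb{R}^{2d})$, to reduce these two-sided bounds to a single no-zeros condition. Since $\mathcal{S}_0$ is a Fourier-invariant Banach algebra under pointwise multiplication, closed under conjugation, the function $\mathcal{F}_W(S)$ and hence $|\mathcal{F}_W(S)|^2$ again lie in $\mathcal{S}_0(\mathbb{R}^{2d})$; and an $\mathcal{S}_0$-function has an absolutely and uniformly convergent periodization, so $G_S$ is continuous on the compact group $\widehat{\Lambda}$. A continuous function on a compact set is automatically bounded above, so the upper Riesz bound comes for free, and it is bounded below by a positive constant exactly when it has no zeros. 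Hence the Riesz-sequence property is equivalent to $G_S$ having no zeros on $\widehat{\Lambda}$, which is the assertion.

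The main obstacle is this last step: it is precisely the regularity built into $\mathcal{B}$ (through $a_{S}\in\mathcal{S}_0$) that converts the generic ``bounded above and below'' characterization into the clean ``no zeros'' criterion, and making this rigorous rests on the Wiener-amalgam / absolute-convergence property of $\mathcal{S}_0$ that guarantees continuity of the periodization on the compact $\widehat{\Lambda}$. A secondary bookkeeping point is to fix the conventions so that $\mathcal{F}_W(S)$ is genuinely the symplectic Fourier transform of $a_{S}$ and that the lattice dual to $\Lambda$ for the symplectic characters is $\Lambda^\circ$; this is what makes the periodization in the statement coincide with the fiber function $G_S$.
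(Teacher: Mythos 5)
Your proposal is correct and takes essentially the same approach as the paper: the paper itself quotes this single-generator statement from \cite[Theorem 6.1]{skret:20} without proof, but its proof of the multiply generated generalization (Theorem~\ref{Ngenerators}) is precisely your argument --- transfer by the unitary, translation-respecting Weyl transform to the system $\{T_\lambda a_{_S}\}_{\lambda\in\Lambda}$ in $L^2(\mathbb{R}^{2d})$, invoke the classical Gramian/fiberization criterion for Riesz sequences of lattice translates (as in \cite{aldroubi:05}), and use the continuity of $\mathcal{F}_W(S)=\mathcal{F}_s(a_{_S})$ guaranteed by $S\in\mathcal{B}$ to convert the a.e.\ two-sided bounds into the pointwise no-zeros condition on the compact group $\widehat{\Lambda}$. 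Your additional observations (that $|\mathcal{F}_W(S)|^2$ stays in $\mathcal{S}_0$, hence has a continuous, absolutely convergent $\Lambda^\circ$-periodization, which on a compact group makes the upper bound automatic and the positive lower bound equivalent to absence of zeros) correctly supply the details the paper leaves implicit.
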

It involves the {\em periodization operator} $P_{\Lambda^\circ}$ in $\Lambda^\circ$ and the {\em Fourier-Wigner transform} $\mathcal{F}_W$ of an operator $S$. In this case, we have that $\mathcal{F}_W(S)=\mathcal{F}_s(a_{_S})$, where $\mathcal{F}_s$ denotes the {\em symplectic Fourier transform} of $a_{_S}$ defined by
\[
\mathcal{F}_s(a_{_S})(z):=\int_{\mathbb{R}^{2d}} a_{_S}(z')\, {\rm e}^{-2\pi i \,\sigma(z, z')}dz'\,,\quad z\in \mathbb{R}^{2d}\,,
\]
where $\sigma$ denotes here the standard symplectic form in $\mathbb{R}^{2d}$. The Fourier-Wigner transform of an operator $S$ is defined as the function
\[
\mathcal{F}_W(S)(z):={\rm e}^{-\pi i \,x\cdot \omega}\, {\rm tr}[\pi(-z)S]\,, \quad z=(x,\omega)\in \mathbb{R}^{2d}\,.
\]
See the details in Ref.~\cite{skret:20}. A similar result to that in the above theorem for a rank-one operator $S=\psi\otimes \phi$, where $\psi, \phi \in  L^2(\mathbb{R}^d)$, can be found in Refs.~\cite{benedetto:06,feichtinger:02}.

\medskip

In case $\{\alpha_\lambda (S)\}_{\lambda \in \Lambda}$ is a Riesz sequence for $\mathcal{H}\mathcal{S}(\mathbb{R}^d)$, the operator $S$ is the {\em generator} of the {\em $\Lambda$-shift-invariant subspace} $V_S^2$ which can be described by 
\[
V_S^2:=\overline{\espan}_{\mathcal{H}\mathcal{S}}\big\{ \alpha_\lambda(S)\big\}_{\lambda \in \Lambda}=\Big\{\sum_{\lambda \in \Lambda} c(\lambda)\, \alpha_\lambda(S)\,\, :\,\, \{c(\lambda)\}_{\lambda \in \Lambda}\in \ell^2(\Lambda) \Big\}\,.
\]
Observe that operators in $V_S^2$ are nothing but {\em Gabor multipliers} in case $S=\varphi\otimes\psi$. Indeed, for 
$\eta \in L^2(\mathbb{R}^d)$ we have
\[
\sum_{\lambda\in \Lambda} c(\lambda)\,\alpha_\lambda(S)(\eta)=\sum_{\lambda\in \Lambda} c(\lambda)\,\big(\pi(\lambda) \varphi \otimes \pi(\lambda)\psi\big)(\eta) =\sum_{\lambda\in \Lambda} c(\lambda)\,V_\psi\,\eta (\lambda)\pi(\lambda)\varphi\,,
\]
that is, $\sum_{\lambda\in \Lambda} c(\lambda)\,\alpha_\lambda(S)=\mathcal{G}_{\mathbf{c}}^{\psi,\varphi}$, the Gabor multiplier with windows $\psi, \varphi$ and mask $\mathbf{c}$ in $\ell^2(\Lambda)$ used in time-frequency analysis (see, for instance, Ref.~\cite{skret:20}).

\medskip

Analogously, a necessary and sufficient condition can be obtained for the multiply generated  case. Indeed, let ${\bf S}=\{S_1, S_2, \dots, S_N\}$ be a fixed subset of $\mathcal{H}\mathcal{S}(\mathbb{R}^d)$ and let $\Lambda$ be a lattice in $\mathbb{R}^{2d}$. We are searching for a necessary and sufficient condition such that  $\{\alpha_\lambda (S_n)\}_{\lambda \in \Lambda;\, n=1,2,\dots,N}$ is a Riesz sequence for $\mathcal{H}\mathcal{S}(\mathbb{R}^d)$, i.e., a Riesz basis for the closed subspace
\[
V_{\bf S}^2:=\overline{\espan}_{\mathcal{H}\mathcal{S}}\big\{ \alpha_\lambda(S_n)\big\}_{\lambda \in \Lambda;\,n=1,2, \dots,N}\subset \mathcal{H}\mathcal{S}(\mathbb{R}^d)\,.
\]
For the multiply generated case we have the following result:
\begin{teo}
\label{Ngenerators}
Let $\Lambda$ be a lattice and $S_n\in \mathcal{B}$, $n=1,2,\dots,N$. Then, $\{\alpha_\lambda (S_n)\}_{\lambda \in \Lambda;\, n=1,2,\dots,N}$ is a Riesz sequence for $\mathcal{H}\mathcal{S}(\mathbb{R}^d)$ if and only if there exist two constants $0<m\le M$ such that 
\[
m\,\mathbb{I}_{_N} \le G_{\bf S}^W(z) \le M\,\mathbb{I}_{_N} \quad  \text{for any $z\in \mathbb{R}^{2d}$}\,,
\]
where $G_{\bf S}^W(z)$ denotes the $N\times N$ matrix-valued function
\[
G_{\bf S}^W(z):=\sum_{\lambda^\circ \in \Lambda^\circ} \mathcal{F}_W({\bf S})(z+\lambda^\circ)\, \overline{\mathcal{F}_W({\bf S})(z+\lambda^\circ)}^\top\,, \quad z\in \mathbb{R}^{2d}\,,
\]
and $\mathcal{F}_W({\bf S})=\big(\mathcal{F}_W(S_1), \mathcal{F}_W(S_2), \dots,\mathcal{F}_W(S_N)\big)^\top$.
\end{teo}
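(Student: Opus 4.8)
The plan is to transport the entire question from $\mathcal{H}\mathcal{S}(\mathbb{R}^d)$ to $L^2(\mathbb{R}^{2d})$ through the Weyl transform, and then to characterize the resulting Riesz sequence of translates by a matrix-valued symbol computed with the symplectic Fourier series, exactly as in the single-generator case recalled before Theorem 1.

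First I would set $a_n:=a_{S_n}\in L^2(\mathbb{R}^{2d})$, the Weyl symbol of $S_n$. Since $L_{T_zf}=\alpha_z(L_f)$, one has $\alpha_\lambda(S_n)=L_{T_\lambda a_n}$ for all $\lambda\in\Lambda$ and $n$; because $f\mapsto L_f$ is unitary from $L^2(\mathbb{R}^{2d})$ onto $\mathcal{H}\mathcal{S}(\mathbb{R}^d)$, the family $\{\alpha_\lambda(S_n)\}_{\lambda,n}$ is a Riesz sequence in $\mathcal{H}\mathcal{S}(\mathbb{R}^d)$ if and only if $\{T_\lambda a_n\}_{\lambda\in\Lambda;\,n=1,\dots,N}$ is a Riesz sequence in $L^2(\mathbb{R}^{2d})$, and with the very same Riesz bounds. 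This is the $N$-generator analogue of the reduction used for $N=1$.

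Second, I would compute the matrix symbol of this sequence of translates. Consider the cross-correlation sequences $c_{mn}(\lambda):=\langle T_\lambda a_n,a_m\rangle_{L^2(\mathbb{R}^{2d})}$, $\lambda\in\Lambda$. The Gram operator of $\{T_\lambda a_n\}$ is the block discrete-convolution system on $\ell^2_{_N}(\Lambda)$ with kernel $[c_{mn}]$, and the relationship between discrete convolution systems and frames of translates in $\ell^2_{_N}(\Lambda)$ (Ref.~\cite{garcia:20}), together with the unitarity of the symplectic Fourier series $\mathcal{F}_s^\Lambda$, shows that fiberwise this operator is multiplication by the matrix-valued function with entries $\mathcal{F}_s^\Lambda(c_{mn})$. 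A Poisson-type computation (Parseval for $\mathcal{F}_s$ and periodization over $\Lambda^\circ$) yields
\[
\mathcal{F}_s^\Lambda(c_{mn})(\dot z)=\frac{1}{|\Lambda|}\sum_{\lambda^\circ\in\Lambda^\circ}\mathcal{F}_s(a_n)(z+\lambda^\circ)\,\overline{\mathcal{F}_s(a_m)(z+\lambda^\circ)}\,,
\]
which, recalling $\mathcal{F}_W(S_n)=\mathcal{F}_s(a_n)$, is precisely the $(m,n)$ entry of $G_{\bf S}^W(z)$ up to the harmless factor $|\Lambda|$. Standard frame theory (Ref.~\cite{ole:16}) then identifies the optimal Riesz bounds of $\{T_\lambda a_n\}$ with the uniform spectral bounds of this matrix symbol, i.e. with the largest $m$ and smallest $M$ for which $m\,\mathbb{I}_{_N}\le G_{\bf S}^W(z)\le M\,\mathbb{I}_{_N}$, which is exactly the asserted condition.

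The step I expect to be the main obstacle is the Fourier/lattice bookkeeping that makes the periodization run over the \emph{adjoint} lattice $\Lambda^\circ$ rather than over an ordinary dual lattice. Writing $\sigma(z,z')=(Jz)\cdot z'$ with $J=\left(\begin{smallmatrix}0&I\\-I&0\end{smallmatrix}\right)$ gives $\mathcal{F}_s(f)=\widehat{f}\circ J$ and $J\Lambda^\circ=\Lambda^{*}$, the ordinary dual lattice, so that the symplectic periodization coincides with the classical one after the orthogonal change of variable $\xi=Jz$; since $J$ is a bijection of $\mathbb{R}^{2d}$, the bounds ``for every $z$'' and the almost-everywhere bounds of the classical statement are equivalent. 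Finally, the hypothesis $S_n\in\mathcal{B}$, i.e. $a_n\in\mathcal{S}_0(\mathbb{R}^{2d})$, forces $\mathcal{F}_W(S_n)\in\mathcal{S}_0$, whence the entries of $G_{\bf S}^W$ are continuous and the upper bound $M<\infty$ is automatic; this is what upgrades the almost-everywhere conclusion to the pointwise one and, for $N=1$, recovers Theorem 1, the ``no zeros'' requirement being nothing but the lower bound $m>0$.
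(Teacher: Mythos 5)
Your proposal is correct and follows essentially the same route as the paper: transport the problem to $L^2(\mathbb{R}^{2d})$ via the unitary, translation-intertwining Weyl transform, characterize the resulting Riesz sequence of lattice translates by uniform two-sided spectral bounds on the periodized Gramian matrix, and use the continuity coming from $S_n\in\mathcal{B}$ (i.e. $a_{S_n}\in\mathcal{S}_0(\mathbb{R}^{2d})$, so $\mathcal{F}_W(S_n)=\mathcal{F}_s(a_{S_n})$ is continuous) to pass from almost-everywhere to pointwise bounds. The only deviation is that where the paper simply cites the classical shift-invariant-space result of Aldroubi--Sun--Tang for the matrix $G_{\bf S}^\sigma$, you re-derive that characterization by fiberizing the Gram operator through the symplectic Fourier series $\mathcal{F}_s^\Lambda$ and make explicit the adjoint-lattice bookkeeping via $\xi=Jz$ --- a step the paper leaves implicit; your residual normalization ambiguities (the factor $|\Lambda|$, a possible reflection $z\mapsto -z$, transposition of the Hermitian matrix) are indeed harmless for the two-sided bound condition.
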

\begin{proof}
As indicated above, it will be a Riesz sequence in $\mathcal{H}\mathcal{S}(\mathbb{R}^d)$ if and only if the sequence $\{T_\lambda (a_{_{S_n}})\}_{\lambda \in \Lambda;\,n=1,2,\dots,N}$ is a Riesz sequence in $L^2(\mathbb{R}^{2d})$. To this end,  we introduce the $N\times N$ matrix-valued function
\[
G_{\bf S}^\sigma(z):=\sum_{\lambda^\circ \in \Lambda^\circ} \mathcal{F}_s(a_{_{\bf S}})(z+\lambda^\circ)\, \overline{ \mathcal{F}_s(a_{_{\bf S}})(z+\lambda^\circ)}^\top\,, \quad z\in \mathbb{R}^{2d}\,,
\]
where $\mathcal{F}_s(a_{_{\bf S}})=\big(\mathcal{F}_s(a_{_{S_1}}), \mathcal{F}_s(a_{_{S_2}}), \dots,\mathcal{F}_s(a_{_{S_N}})\big)^\top$. It is known (see, for instance, Ref.~\cite{aldroubi:05}) that the sequence $\{T_\lambda (a_{_{S_n}})\}_{\lambda \in \Lambda;\,n=1,2,\dots,N}$ is a Riesz sequence in $L^2(\mathbb{R}^{2d})$ if and only if there exist two constants $0<m\le M$ such that $m\,\mathbb{I}_{_N} \le G_{\bf S}^\sigma(z) \le M\,\mathbb{I}_{_N}$, a.e. $z\in \mathbb{R}^{2d}$, where $\mathbb{I}_{_N}$ denotes the $N\times N$ identity matrix. 
Assuming as before that $S_n\in \mathcal{B}$, $n=1,2,\dots,N$, the functions $\mathcal{F}_s(a_{_{S_n}})$ are continuous and $\mathcal{F}_W(S_n)=\mathcal{F}_s(a_{_{S_n}})$ for $n=1,2,\dots,N$. Hence, the above necessary and sufficient condition can be expressed in terms of the hermitian matrix $G_{\bf S}^W(z)$ as in the statement of the theorem.
\end{proof}

In this case, ${\bf S}=\{S_1, S_2, \dots, S_N\}$ is a {\em set of generators} for the {\em $\Lambda$-shift-invariant subspace} $V_{\bf S}^2:=\overline{\espan}_{\mathcal{H}\mathcal{S}}\big\{ \alpha_\lambda(S_n)\big\}_{\lambda \in \Lambda;\,n=1,2, \dots,N}$ which can be described by 
\[
V_{\bf S}^2=\Big\{\sum_{n=1}^N\sum_{\lambda \in \Lambda} c_n(\lambda)\, \alpha_\lambda(S_n)\,\, :\,\, \{c_n(\lambda)\}_{\lambda \in \Lambda}\in \ell^2(\Lambda)\,,\, n=1, 2, \dots, N \Big\}\,.
\]
\subsection{The isomorphism $ \mathcal{T}_{\bf S}$}
Our sampling results rely on the following isomorphism $\mathcal{T}_{\bf S}$ which involves the spaces $\ell^2_{_N}(\Lambda)$, the shift-invariant subspace $V_{\sigma_{_{\bf S}}}^2$ in $L^2(\mathbb{R}^{2d})$ generated by the Kohn-Nirenberg symbols $\sigma_{_{S_n}}$ of $S_n$, $n=1,2, \dots,N$,  and the $\Lambda$-shift-invariant subspace $V_{\bf S}^2$. 
Namely,
\begin{equation}
\label{isos}
\begin{array}[c]{cccccc}
 \mathcal{T}_{\bf S}:&  \ell^2_{_N}(\Lambda) & \longrightarrow & V_{\sigma_{\bf S}}^2 \subset L^2(\mathbb{R}^{2d}) & \longrightarrow & V_{\bf S}^2 \subset \mathcal{H}\mathcal{S}(\mathbb{R}^d)
 \phantom{\dfrac{a}{b}} \\
       & (c_1, c_2, \dots, c_N)^\top & \longmapsto &\displaystyle{\sum_{n=1}^N \sum_{\lambda \in \Lambda} c_n(\lambda)\, T_\lambda  \sigma_{_{S_n}}}  & \longmapsto  &\displaystyle{\sum_{n=1}^N \sum_{\lambda \in \Lambda} c_n(\lambda)\, \alpha_\lambda(S_n)}\,.
\end{array}
\end{equation}
The isomorphism $\mathcal{T}_{\bf S}$ is the composition of the  isomorphism $\mathcal{T}_{\sigma_{\bf S}}:\ell^2_{_N}(\Lambda)\rightarrow V_{\sigma_{\bf S}}^2$ which maps the standard orthonormal basis 
$\{\boldsymbol{\delta}_\lambda\}_{\lambda \in \Lambda}$ for $\ell^2_{_N}(\Lambda)$ onto the Riesz basis $\{T_\lambda  \sigma_{_{S_n}}\}_{\lambda \in \Lambda;\,n=1,2,\dots,N}$ for $V_{\sigma_{\bf S}}^2$, and the Kohn-Nirenberg transform transform between $V_{\sigma_{\bf S}}^2$ and $V_{\bf S}^2$. 

Recall that the Kohn-Nirenberg transform $L^2(\mathbb{R}^{2d}) \ni f\mapsto K_f\in  \mathcal{H}\mathcal{S}(\mathbb{R}^d) $ is a unitary operator which respects translations in the sense that $K_{T_zf}=\alpha_z(K_f)$ for $f\in L^2(\mathbb{R}^{2d})$ and $z \in \mathbb{R}^{2d}$. See, for instance, Ref.~\cite{feichtinger:07,grochenig:01}.

\subsection{An expression for the samples}
\label{section3.3}
For each $T=\sum_{n=1}^N\sum_{\mu \in \Lambda} c_n(\mu)\, \alpha_\mu(S_n)$ in $V_{\bf S}^2$ we define a set of  {\em diagonal channel samples} as
\begin{equation}
\label{dchsamp}
 \mathbf{s}_{_T}(\lambda):=\big(\langle \alpha_{-\lambda}(T) g_1, \widetilde{g}_1 \rangle, \langle \alpha_{-\lambda}(T) g_2, \widetilde{g}_2 \rangle, \dots, \langle \alpha_{-\lambda}(T) g_{_M}, \widetilde{g}_{_M} \rangle\big)^\top\,,\quad \lambda \in \Lambda\,,
\end{equation}
where $g_m, \widetilde{g}_m$, $m=1, 2, \dots, M$, denote $2M$ fixed functions in $L^2(\mathbb{R}^d)$. For $m=1, 2, \dots, M$ the above samples can be expressed by
\begin{equation}
\label{samples2}
\begin{split}
s_{_{T,m}}(\lambda):=&\big\langle \alpha_{-\lambda}(T) g_m, \widetilde{g}_m \big\rangle_{L^2(\mathbb{R}^d)}= \big\langle \sum_{n=1}^N \sum_{\mu \in \Lambda} c_n(\mu) \alpha_{\mu-\lambda}(S_n)g_m, \widetilde{g}_m \big\rangle_{L^2(\mathbb{R}^d)} \\
=&\sum_{n=1}^N \sum_{\mu \in \Lambda} c_n(\mu) \big\langle \alpha_{\mu-\lambda}(S_n)g_m, \widetilde{g}_m \big\rangle_{L^2(\mathbb{R}^d)}
=\sum_{n=1}^N \big(a_{m,n} \ast_\Lambda c_n\big)(\lambda)\,, \quad \lambda \in \Lambda\,,
\end{split}
\end{equation}
where $a_{m,n}(\mu):=\big\langle \alpha_{-\mu}(S_n) g_m, \widetilde{g}_m \big\rangle_{L^2(\mathbb{R}^d)}$, \,$\mu \in \Lambda$. Observe that $a_{m,n}(\lambda)$, $\lambda \in \Lambda$, are precisely the samples $ \mathbf{s}_{_{S_n}}(\lambda)$, $\lambda \in \Lambda$, of the generator $S_n$.
\begin{lema}
\label{lemmasamples}
Concerning the samples defined in Eq.~\eqref{samples2} we have:
\begin{enumerate}
\item For $m=1, 2, \dots, M$ these samples can be written as
\begin{equation}
\label{eledos}
\big\langle \alpha_{-\lambda}(T) g_m, \widetilde{g}_m \big\rangle_{L^2(\mathbb{R}^d)}=\big\langle T\pi(\lambda)g_m, \pi(\lambda)\widetilde{g}_m\big\rangle_{L^2(\mathbb{R}^d)}=\big\langle T, \alpha_{\lambda}(\widetilde{g}_m\otimes g_m)\big\rangle_{_{\mathcal{H}\mathcal{S}}}\,,\,\, \lambda \in \Lambda\,.
\end{equation}
\item The sequences $\big\{a_{m,n}(\lambda)\big\}_{\lambda \in \Lambda}$ appearing in Eq.~\eqref{samples2} belong to $\ell^2(\Lambda)$ for $m=1,2, \dots,M$ and $n=1,2, \dots,N$.
\end{enumerate}
\end{lema}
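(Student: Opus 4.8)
The plan is to prove the two items of Lemma~\ref{lemmasamples} separately: item (1) is a chain of algebraic identities for time-frequency shifts and rank-one operators, and item (2) reduces, through item (1) and the Weyl transform, to a standard Bessel estimate for lattice translates of a Feichtinger-algebra function. For item (1), I would establish the two equalities in turn. For the first one I expand $\alpha_{-\lambda}(T)=\pi(-\lambda)\,T\,\pi(-\lambda)^*$ and, using that $\pi(-\lambda)$ is unitary, move it into the second slot of the inner product to get $\langle\alpha_{-\lambda}(T)g_m,\widetilde g_m\rangle=\langle T\,\pi(-\lambda)^* g_m,\pi(-\lambda)^*\widetilde g_m\rangle$. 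Writing $\lambda=(x,\omega)$ and inserting $\pi(z)^*={\rm e}^{-2\pi i\,x\cdot\omega}\pi(-z)$ (so that $\pi(-\lambda)^*={\rm e}^{-2\pi i\,x\cdot\omega}\pi(\lambda)$), the unimodular factor appears once in each slot; since it is conjugated in the second slot the two phases cancel, leaving $\langle T\pi(\lambda)g_m,\pi(\lambda)\widetilde g_m\rangle$. For the second equality I invoke the rule $\alpha_\lambda(\widetilde g_m\otimes g_m)=[\pi(\lambda)\widetilde g_m]\otimes[\pi(\lambda)g_m]$ recalled in Section~\ref{section2}, set $u:=\pi(\lambda)\widetilde g_m$ and $v:=\pi(\lambda)g_m$, and compute $\langle T,u\otimes v\rangle_{\mathcal{H}\mathcal{S}}={\rm tr}\big(T(u\otimes v)^*\big)$; since $(u\otimes v)^*=v\otimes u$ and $T(v\otimes u)=(Tv)\otimes u$, whose trace is $\langle Tv,u\rangle$, this gives $\langle T,u\otimes v\rangle_{\mathcal{H}\mathcal{S}}=\langle Tv,u\rangle=\langle T\pi(\lambda)g_m,\pi(\lambda)\widetilde g_m\rangle$, completing the claimed chain.

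For item (2), I would first note, as observed just before the statement, that $a_{m,n}(\lambda)$ are the samples $\mathbf{s}_{_{S_n}}(\lambda)$ of the generator $S_n$; hence item (1) applied with $T=S_n$ gives $a_{m,n}(\lambda)=\langle S_n,\alpha_\lambda(Q_m)\rangle_{\mathcal{H}\mathcal{S}}$ with $Q_m:=\widetilde g_m\otimes g_m$. Transporting this to $L^2(\mathbb{R}^{2d})$ through the Weyl transform, which is unitary and satisfies $\alpha_\lambda(L_f)=L_{T_\lambda f}$, turns each sample into an $L^2$-pairing against a lattice translate: $a_{m,n}(\lambda)=\langle a_{_{S_n}},T_\lambda a_{_{Q_m}}\rangle_{L^2(\mathbb{R}^{2d})}=\langle T_{-\lambda}a_{_{S_n}},a_{_{Q_m}}\rangle_{L^2(\mathbb{R}^{2d})}$, where $a_{_{S_n}},a_{_{Q_m}}$ are the Weyl symbols of $S_n,Q_m$. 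Since $S_n\in\mathcal{B}$, its Weyl symbol $a_{_{S_n}}$ lies in Feichtinger's algebra $\mathcal{S}_0(\mathbb{R}^{2d})$, and it is well known that the lattice translates $\{T_\mu a_{_{S_n}}\}_{\mu\in\Lambda}$ of an $\mathcal{S}_0$-function form a Bessel sequence in $L^2(\mathbb{R}^{2d})$ (see, e.g., Ref.~\cite{grochenig:01}). Applying this Bessel bound, with constant $B$, to the fixed vector $a_{_{Q_m}}\in L^2(\mathbb{R}^{2d})$ and reindexing $\mu=-\lambda$ yields $\sum_{\lambda\in\Lambda}|a_{m,n}(\lambda)|^2=\sum_{\mu\in\Lambda}|\langle a_{_{Q_m}},T_\mu a_{_{S_n}}\rangle|^2\le B\,\|a_{_{Q_m}}\|_{L^2}^2<\infty$, which is the assertion.

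The routine part is item (1): it is just careful bookkeeping of the unimodular phase in $\pi(z)^*$ together with the definition of the rank-one operators and of the $\mathcal{H}\mathcal{S}$-inner product. The main obstacle lies in item (2) and is one of asymmetry: because the windows $g_m,\widetilde g_m$ are only assumed to be in $L^2(\mathbb{R}^d)$, the rank-one operators $Q_m$ and their symbols $a_{_{Q_m}}$ are merely square-integrable, so $\{T_\lambda a_{_{Q_m}}\}_{\lambda\in\Lambda}$ need not be a Bessel sequence and square-summability cannot be read off from the window side. All the required regularity must therefore be extracted from the generators, through $a_{_{S_n}}\in\mathcal{S}_0(\mathbb{R}^{2d})$, which is precisely where the hypothesis $S_n\in\mathcal{B}$ enters. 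The same computation goes through verbatim with the Kohn-Nirenberg transform in place of the Weyl transform, both being unitary and translation-intertwining.
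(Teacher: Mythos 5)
Your item (1) is correct, and it takes a genuinely different route from the paper. You argue directly at the operator level: move the unitary $\pi(-\lambda)$ across the inner product, cancel the unimodular phases coming from $\pi(-\lambda)^*={\rm e}^{-2\pi i\,x\cdot\omega}\pi(\lambda)$, and compute $\big\langle T, u\otimes v\big\rangle_{_{\mathcal{H}\mathcal{S}}}={\rm tr}\big(T(v\otimes u)\big)=\langle Tv,u\rangle$ using $(u\otimes v)^*=v\otimes u$ and ${\rm tr}(x\otimes y)=\langle x,y\rangle$. The paper instead passes to Kohn-Nirenberg symbols: it writes both sides of each equality as a pairing $\big\langle \sigma_{_T}, R(\cdot,\cdot)\big\rangle_{L^2(\mathbb{R}^{2d})}$, using the invariance $R\big(\pi(-\lambda)^*\widetilde{g}_m,\pi(-\lambda)^*g_m\big)=R\big(\pi(\lambda)\widetilde{g}_m,\pi(\lambda)g_m\big)$ of the Rihaczek distribution and the fact that the Kohn-Nirenberg symbol of a rank-one operator is the Rihaczek distribution of its factors. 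Your version is more elementary and self-contained; the paper's sets up identities about symbols of rank-one operators that it reuses elsewhere.

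Item (2), however, has a genuine gap: you derive the Bessel property from the hypothesis $S_n\in\mathcal{B}$, i.e.\ $a_{_{S_n}}\in\mathcal{S}_0(\mathbb{R}^{2d})$, but that hypothesis is not part of the lemma. Membership in $\mathcal{B}$ is assumed only in Theorem \ref{Ngenerators} (to characterize the Riesz property via the continuous function $\mathcal{F}_W(S_n)$) and in the illustrative example; the standing assumption in Section \ref{section3.3}, under which the lemma is stated, is only that $\{\alpha_\lambda (S_n)\}_{\lambda \in \Lambda;\, n=1,2,\dots,N}$ is a Riesz sequence in $\mathcal{H}\mathcal{S}(\mathbb{R}^d)$ — which is exactly what makes $V_{\bf S}^2$ well defined. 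Under that assumption alone your appeal to ``lattice translates of an $\mathcal{S}_0$-function are Bessel'' is unavailable, so your proof as written does not apply to the stated lemma. The repair is immediate and is what the paper does: a Riesz sequence is in particular a Bessel sequence in $\mathcal{H}\mathcal{S}(\mathbb{R}^d)$, so pairing the fixed Hilbert-Schmidt operator $Q_m=\widetilde{g}_m\otimes g_m$ against it gives $\sum_{\lambda\in\Lambda}|a_{m,n}(\lambda)|^2=\sum_{\lambda\in\Lambda}\big|\big\langle \alpha_{-\lambda}(S_n), Q_m\big\rangle_{_{\mathcal{H}\mathcal{S}}}\big|^2\le B\,\|Q_m\|_{_{\mathcal{H}\mathcal{S}}}^2<\infty$. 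Equivalently, in your own formulation: the standing Riesz assumption transfers through the unitary Weyl (or Kohn-Nirenberg) transform to say that $\{T_\mu a_{_{S_n}}\}_{\mu\in\Lambda;\,n}$ is a Riesz, hence Bessel, sequence in $L^2(\mathbb{R}^{2d})$, after which your pairing computation goes through verbatim — no Feichtinger-algebra regularity is needed. Accordingly, your closing diagnosis should be amended: the ``regularity extracted from the generators'' is the Riesz/Bessel property they are assumed to have, not $\mathcal{S}_0$ membership.
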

\begin{proof}
For the first equality in \eqref{eledos} we have that
\[
\begin{split}
s_{_{T,m}}(\lambda)&=\big\langle \alpha_{-\lambda}(T) g_m, \widetilde{g}_m \big\rangle_{L^2(\mathbb{R}^d)}=\big\langle \pi(-\lambda) T \pi(-\lambda)^*g_m, \widetilde{g}_m \big\rangle_{L^2(\mathbb{R}^d)}\\
&=\big\langle \sigma_{_T}, R\big(\pi(-\lambda)^*\widetilde{g}_m, \pi(-\lambda)^*g_m\big) \big\rangle_{L^2(\mathbb{R}^{2d})}\,,\quad \lambda \in \Lambda\,.
\end{split}
\]
On the other hand, it is easy to check that for the Rihaczek distribution one gets
\[
R\big(\pi(-\lambda)^*\widetilde{g}_m, \pi(-\lambda)^*g_m\big)(z)=R\big(\pi(\lambda)\widetilde{g}_m, \pi(\lambda)g_m\big)(z)\,, \quad z\in \mathbb{R}^{2d}\,.
\]
Hence, for each $\lambda \in \Lambda$ we obtain
\[
\big\langle \alpha_{-\lambda}(T) g_m, \widetilde{g}_m \big\rangle_{L^2(\mathbb{R}^d)}=\big\langle \sigma_{_T}, R\big(\pi(\lambda)\widetilde{g}_m, \pi(\lambda)g_m\big) \big\rangle_{L^2(\mathbb{R}^{2d})}=\big\langle T\pi(\lambda)g_m, \pi(\lambda)\widetilde{g}_m\big\rangle_{L^2(\mathbb{R}^d)}\,.
\]
For the second equality we get
\[
\begin{split}
\big\langle T, \alpha_\lambda(\widetilde{g}_m\otimes g_m) \big\rangle_{_{\mathcal{H}\mathcal{S}}}&=\big\langle T, \pi(\lambda)\widetilde{g}_m \otimes \pi(\lambda)g_m  \big\rangle_{_{\mathcal{H}\mathcal{S}}}=\big\langle \sigma_{T}, \sigma_{\pi(\lambda)\widetilde{g}_m\otimes \pi(\lambda)g_m} \big\rangle_{L^2(\mathbb{R}^{2d})}
\\
&=\big\langle \sigma_{T}, R(\pi(\lambda)\widetilde{g}_m, \pi(\lambda)g_m) \big\rangle_{L^2(\mathbb{R}^{2d})}=\big\langle T\pi(\lambda)g_m, \pi(\lambda)\widetilde{g}_m \big\rangle_{L^2(\mathbb{R}^{d})}\,.
\end{split}
\]
We have used that the Kohn-Nirenberg symbol of $\pi(\lambda)\widetilde{g}_m \otimes \pi(\lambda)g_m$ coincides with the Rihaczek distribution of the pair of functions $\pi(\lambda)\widetilde{g}_m$ and $\pi(\lambda)g_m$ in $L^2(\mathbb{R}^d)$.

\medskip

In particular we have proved that 
\[
a_{m,n}(\lambda)=\big\langle \alpha_{-\lambda}(S_n) g_m, \widetilde{g}_m \big\rangle_{L^2(\mathbb{R}^d)}=\big\langle S_n, \alpha_{\lambda}(\widetilde{g}_m\otimes g_m)\big\rangle_{_{\mathcal{H}\mathcal{S}}}=\big\langle  \alpha_{-\lambda}(S_n), \widetilde{g}_m\otimes g_m\big\rangle_{_{\mathcal{H}\mathcal{S}}}\,,\quad \lambda \in \Lambda\,.
\]
Since $\{\alpha_\lambda (S_n)\}_{\lambda \in \Lambda;\, n=1,2,\dots,N}$ is a Riesz sequence for $\mathcal{H}\mathcal{S}(\mathbb{R}^d)$, it is in particular a Bessel sequence in $\mathcal{H}\mathcal{S}(\mathbb{R}^d)$. Hence, the sequences $\big\{\big\langle  \alpha_{-\lambda}(S_n), \widetilde{g}_m\otimes g_m\big\rangle_{_{\mathcal{H}\mathcal{S}}}\big\}_{\lambda \in \Lambda}$ belongs to $\ell^2(\Lambda)$ for $m=1, 2, \dots, M$ and $n=1, 2, \dots, N$.
\end{proof}
Once we have that $a_{m,n} \in \ell^2(\Lambda)$ for each $m=1, 2, \dots, M$ and $n=1, 2, \dots, N$, and denoting $A=[a_{m,n}]$ the corresponding $M\times N$ matrix with entries in $\ell^2(\Lambda)$, the sampling process in \eqref{dchsamp} is described by means of the discrete convolution system
\[
T=\sum_{n=1}^N\sum_{\lambda \in \Lambda} c_n(\lambda)\, \alpha_\lambda(S_n) \in V_{\bf S}^2 \longmapsto \mathbf{s}_{_T}(\lambda)=\big(A\ast_\Lambda \mathbf{c}\big)(\lambda)=\sum_{\mu\in \Lambda} A(\lambda-\mu)\, \mathbf{c}(\mu),\quad \lambda \in \Lambda\,,
\]
where $\mathbf{c}=(c_1, c_2, \dots, c_N)^\top \in \ell^2_{_N}(\Lambda):=\ell^2(\Lambda)\times \dots \times \ell^2(\Lambda)$ ($N$ times). Note that the $m$-th entry of \,$A \ast_\Lambda \mathbf{c}$\, is\, $\sum_{n=1}^N (a_{m,n}\ast_\Lambda c_{n})$. 

\medskip

First of all, the mapping $\mathcal{A}:  \ell^2_{_N}(\Lambda) \rightarrow  \ell^2_{_M}(\Lambda)$ which maps $\mathbf{c} \mapsto A \ast_\Lambda \mathbf{c}$ is a well-defined bounded operator if and only if the $M\times N$  matrix-valued function $\widehat{A}(\xi):=\big[\mathcal{F}_s^\Lambda(a_{m,n})(\xi)\big]$, a.e. $\xi \in \widehat{\Lambda}$, has entries in $L^\infty(\widehat{\Lambda})$. The needed results on discrete convolution systems $\mathcal{A}:  \ell^2_{_N}(\Lambda) \rightarrow  \ell^2_{_M}(\Lambda)$, and their relationship with frames of translates in $\ell^2_{_N}(\Lambda)$ can be found in Ref.~\cite{garcia:20}. Notice that the $m$-th component of  $A \ast_\Lambda \mathbf{c}$ is
\[
[A\ast \mathbf{c}]_{m}(\lambda)=\sum_{n=1}^N (a_{m,n} \ast_{_\Lambda} c_n)(\lambda) =\big\langle \mathbf{c}, T_{\lambda}\,\mathbf{a}^*_{m} \big\rangle_{\ell^2_{_N}(\Lambda)}\,, \quad \lambda \in \Lambda\,,
\]
where $a^*_{m,n}$ denotes the involution $a^*_{m,n}(\lambda):=\overline{a_{m,n}(-\lambda)}$, $\lambda\in \Lambda$. As a consequence, the operator 
$\mathcal{A}$ is the analysis operator of the sequence  $\big\{T_{\lambda} \mathbf{a}^*_{m}\big\}_{\lambda \in \Lambda;\, m=1,2,\dots,M}$ in $\ell^2_{_N}(\Lambda)$. Since the sequence $\big\{T_{\lambda}\,\mathbf{a}^*_{m}\big\}_{\lambda \in \Lambda;\, m=1,2,\dots,M}$  is a frame for $\ell^2_{_N}(\Lambda)$ if and only if its bounded analysis operator is injective with a closed range (see Ref.~\cite{ole:16}), it will be a frame for $\ell^2_{N}(\Lambda)$ if and only if 
\begin{equation}
\label{frame}
0<\alpha_A:=\einf_{\xi\in \widehat{\Lambda}} \lambda_{\text{min}}[\,\widehat{A}(\xi)^*\widehat{A}(\xi)] \le \beta_A:= \esup_{\xi\in \widehat{\Lambda}} \lambda_{\text{max}}[\widehat{A}(\xi)^*\widehat{A}(\xi)]<+\infty\,,
\end{equation}
where $\lambda_{\text{min}}$ (respectively, $\lambda_{\text{max}}$) denotes the smallest (respectively, the largest) eigenvalue of the positive semidefinite matrix $\widehat{A}(\xi)^*\widehat{A}(\xi)$ (see Ref.~\cite{garcia:20}).

\medskip

Concerning the duals of $\big\{T_{\lambda}\,\mathbf{a}^*_{m}\big\}_{\lambda \in \Lambda;\, m=1,2,\dots,M}$ having its same structure, consider two matrices $\widehat{A}\in \mathcal{M}_{_{M\times N}}(L^\infty(\widehat{\Lambda}))$ and $\widehat{B}\in \mathcal{M}_{_{N\times M}}(L^\infty(\widehat{\Lambda}))$, and let $\mathbf{b}_{m}$ denote the $m$-th column of the matrix $B$ associated to $\widehat{B}$. Then, the sequences $\big\{T_\lambda\,\mathbf{a}^*_{m}\big\}_{\lambda \in \Lambda;\, m=1,2,\dots,M}$ and $\big\{T_\lambda\,\mathbf{b}_{m}\big\}_{\lambda \in \Lambda;\, m=1,2,\dots,M}$ form a pair of dual frames for $\ell^2_{_N}(\Lambda)$ if and only if $\widehat{B}(\xi)\,\widehat{A}(\xi)=\mathbb{I}_{_N}$,\,\, a.e. $\xi \in \widehat{\Lambda}$; equivalently, if and only if $\mathcal{B}\,\mathcal{A}=\mathcal{I}_{\ell^2_{N}(\Lambda)}$, i.e., the convolution system 
$\mathcal{B}$ with matrix $B$ is a left-inverse of the convolution system $\mathcal{A}$ with matrix $A$. Thus,  we have the frame expansion
\[
\mathbf{c}=\sum_{m=1}^M \sum_{\lambda\in \Lambda} \big\langle \mathbf{c}, T_\lambda \mathbf{a}^*_{m} \big \rangle_{\ell^2_{_N}(\Lambda)}\, T_\lambda \mathbf{b}_{m}\quad \text{for each $\mathbf{c} \in \ell^2_{_N}(\Lambda)$}\,.
\]
Observe that a possible left-inverse $\widehat{B}(\xi)$ of the matrix $\widehat{A}(\xi)$ is given by its Moore-Penrose pseudo-inverse $\widehat{A}(\xi)^\dag=\big[\widehat{A}(\xi)^*\widehat{A}(\xi)\big]^{-1}\widehat{A}(\xi)^*$, a.e. $\xi \in \widehat{\Lambda}$.
\subsection{The sampling results}
\label{section3.4}
 Next we prove the main sampling result in this paper:
\begin{teo}
\label{tsamp1}
Suppose that  for each $T \in V_{\bf S}^2$ we consider the samples defined by \eqref{dchsamp}, and such that the matrix $A=[a_{m,n}]$, where
$a_{m,n}(\lambda)=\big\langle \alpha_{-\lambda}(S_n) g_m, \widetilde{g}_m \big\rangle_{L^2(\mathbb{R}^d)}$, \,$\lambda \in \Lambda$, satisfies conditions in Eq.~\eqref{frame}. Then,  there exist $M\geq N$ elements $H_m\in V_{\bf S}^2$, $m=1,2,\dots,M$, such that the sampling formula 
\begin{equation}
\label{fsamp1}
T=\sum_{m=1}^M\sum_{\lambda \in \Lambda} s_{_{T,m}}(\lambda) \, \alpha_\lambda(H_m)\quad \text{in $\mathcal{H}\mathcal{S}$-norm}
\end{equation}
holds for each $T\in V_{\bf S}^2$ where $\{\alpha_\lambda (H_m)\}_{\lambda \in \Lambda;\, m=1,2,\dots,M}$ is a frame for $V_{\bf S}^2$. The convergence of the series is unconditional in Hilbert-Schmidt norm. 

Moreover,  the $\ell^2$-norm of the samples $\|{\bf s}_{_T}\|_{\ell^2_{_M}}$ defines an equivalent norm to $\|T\|_{_{\mathcal{H}\mathcal{S}}}$  in $V_{\bf S}^2$, and for each $f\in L^2(\mathbb{R}^d)$ we have the pointwise expansion
\[
Tf=\sum_{m=1}^M\sum_{\lambda \in \Lambda} s_{_{T,m}}(\lambda) \, \alpha_\lambda(H_m)f\quad \text{in 
$L^2(\mathbb{R}^d)$}\,.
\]
\end{teo}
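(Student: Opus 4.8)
The plan is to reduce everything, via the isomorphism $\mathcal{T}_{\bf S}$ of \eqref{isos}, to the frame expansion already available in the sequence space $\ell^2_{_N}(\Lambda)$. The crucial structural fact I would record first is that $\mathcal{T}_{\bf S}$ intertwines the translation $T_\mu$ on $\ell^2_{_N}(\Lambda)$ with the operator translation $\alpha_\mu$ on $V_{\bf S}^2$; that is, $\mathcal{T}_{\bf S}(T_\mu \mathbf{c})=\alpha_\mu\big(\mathcal{T}_{\bf S}\mathbf{c}\big)$ for every $\mu\in\Lambda$. This follows directly from the definition of $\mathcal{T}_{\bf S}$ together with the representation identity $\alpha_{\lambda+\mu}=\alpha_\mu\alpha_\lambda$, after a reindexing of the double series defining $\mathcal{T}_{\bf S}\mathbf{c}$.

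Next I would identify the samples with the analysis coefficients of the frame $\{T_\lambda\mathbf{a}^*_m\}$. Writing $T=\mathcal{T}_{\bf S}\mathbf{c}$ with $\mathbf{c}=(c_1,\dots,c_N)^\top\in\ell^2_{_N}(\Lambda)$, the computation \eqref{samples2} together with the identity $[A\ast_\Lambda\mathbf{c}]_m(\lambda)=\langle\mathbf{c},T_\lambda\mathbf{a}^*_m\rangle_{\ell^2_{_N}(\Lambda)}$ recorded in Section~\ref{section3.3} shows $s_{_{T,m}}(\lambda)=\langle\mathbf{c},T_\lambda\mathbf{a}^*_m\rangle_{\ell^2_{_N}(\Lambda)}$. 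Under hypothesis \eqref{frame} the sequence $\{T_\lambda\mathbf{a}^*_m\}$ is a frame for $\ell^2_{_N}(\Lambda)$; choosing a dual frame of the same structure $\{T_\lambda\mathbf{b}_m\}$ (for instance the one induced by the pseudo-inverse $\widehat{A}(\xi)^\dag$) gives the unconditionally convergent expansion
\[
\mathbf{c}=\sum_{m=1}^M\sum_{\lambda\in\Lambda} s_{_{T,m}}(\lambda)\,T_\lambda\mathbf{b}_m \quad\text{in }\ell^2_{_N}(\Lambda)\,.
\]
I would then set $H_m:=\mathcal{T}_{\bf S}(\mathbf{b}_m)\in V_{\bf S}^2$ and apply the bounded operator $\mathcal{T}_{\bf S}$ to this expansion: continuity carries the series termwise, the intertwining property gives $\mathcal{T}_{\bf S}(T_\lambda\mathbf{b}_m)=\alpha_\lambda(H_m)$, and one obtains \eqref{fsamp1}. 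Since $\mathcal{T}_{\bf S}$ is a topological isomorphism, it preserves unconditional convergence, so the series converges unconditionally in $\mathcal{H}\mathcal{S}$-norm, and it maps the frame $\{T_\lambda\mathbf{b}_m\}$ for $\ell^2_{_N}(\Lambda)$ onto a frame $\{\alpha_\lambda(H_m)\}$ for $V_{\bf S}^2$.

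For the equivalence of norms I would combine the frame inequalities \eqref{frame}, which read $\alpha_A\,\|\mathbf{c}\|^2_{\ell^2_{_N}}\le\|\mathbf{s}_{_T}\|^2_{\ell^2_{_M}}\le\beta_A\,\|\mathbf{c}\|^2_{\ell^2_{_N}}$, with the Riesz-basis equivalence $\|T\|_{_{\mathcal{H}\mathcal{S}}}\asymp\|\mathbf{c}\|_{\ell^2_{_N}}$ supplied by $\mathcal{T}_{\bf S}$; chaining the two yields $\|\mathbf{s}_{_T}\|_{\ell^2_{_M}}\asymp\|T\|_{_{\mathcal{H}\mathcal{S}}}$ on $V_{\bf S}^2$. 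Finally, for fixed $f\in L^2(\mathbb{R}^d)$ the evaluation map $S\mapsto Sf$ is bounded from $\mathcal{H}\mathcal{S}(\mathbb{R}^d)$ into $L^2(\mathbb{R}^d)$, since $\|Sf\|_{L^2}\le\|S\|_{_{\mathcal{H}\mathcal{S}}}\|f\|_{L^2}$, so applying it to \eqref{fsamp1} delivers the pointwise expansion. There is no deep obstacle here once the convolution-system and frame-of-translates machinery of Section~\ref{section3.3} is in place; the points demanding genuine care are the clean verification of the intertwining relation $\mathcal{T}_{\bf S}T_\mu=\alpha_\mu\mathcal{T}_{\bf S}$, the exact matching of $s_{_{T,m}}(\lambda)$ with the analysis coefficient $\langle\mathbf{c},T_\lambda\mathbf{a}^*_m\rangle$, and confirming that unconditional convergence and the frame property transfer faithfully through the isomorphism.
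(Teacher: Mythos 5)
Your proposal is correct and follows essentially the same route as the paper's own proof: express the samples as analysis coefficients of the frame of translates $\{T_\lambda\mathbf{a}^*_m\}$ in $\ell^2_{_N}(\Lambda)$, expand $\mathbf{c}$ against a structured dual frame $\{T_\lambda\mathbf{b}_m\}$, and push the expansion through the isomorphism $\mathcal{T}_{\bf S}$ (whose intertwining with translations, which you verify directly by reindexing, the paper obtains from $K_{T_zf}=\alpha_z(K_f)$), defining $H_m=\mathcal{T}_{\bf S}(\mathbf{b}_m)$; the norm equivalence and the pointwise expansion are handled exactly as in the paper.
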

\begin{proof}
Under the hypotheses of the theorem the sequence $\big\{T_\lambda\,\mathbf{a}^*_{m}\big\}_{\lambda \in \Lambda;\, m=1,2,\dots,M}$ is a frame for $\ell^2_{_N}(\Lambda)$, and we can consider a dual frame  $\big\{T_\lambda\,\mathbf{b}_{m}\big\}_{\lambda \in \Lambda;\, m=1,2,\dots,M}$ with the same structure. As a consequence, for each 
$T=\sum_{n=1}^N\sum_{\lambda \in \Lambda} c_n(\lambda)\, \alpha_\lambda(S_n)$ in $V_{\bf S}^2$ we have
\begin{equation}
\label{frameexpansionc}
\mathbf{c}=\sum_{m=1}^M \sum_{\lambda\in \Lambda} \big\langle \mathbf{c}, T_\lambda \mathbf{a}^*_{m} \big \rangle_{\ell^2_{_N}(\Lambda)}\, T_\lambda \mathbf{b}_{m}=\sum_{m=1}^M \sum_{\lambda\in \Lambda}s_{_{T,m}}(\lambda)\, T_\lambda \mathbf{b}_{m} \quad \text{in $\ell^2_{_N}(\Lambda)$}\,,
\end{equation}
where $\mathbf{c}=(c_1, c_2, \dots, c_N)^\top \in \ell^2_{_N}(\Lambda)$. Notice that the fact that  $\big\{T_\lambda\,\mathbf{a}^*_{m}\big\}_{\lambda \in \Lambda;\, m=1,2,\dots,M}$ is a frame for $\ell^2_{_N}(\Lambda)$  and the isomorphism  $\mathcal{T}_{\bf S}$ in Eq.~\eqref{isos} give the equivalence of the norms.

The isomorphism $\mathcal{T}_{\bf S}$ defined by Eq.~\eqref{isos} applied in Eq.~\eqref{frameexpansionc} gives the sampling expansion \eqref{fsamp1},
where $H_m=K_{h_m} \in V_{\bf S}^2$ with Kohn-Nirenberg symbol $h_m=\mathcal{T}_{\sigma_{\bf S}}(\mathbf{b}_m) \in V_{\sigma_{\bf S}}^2$, $m=1,2,\dots,M$. Furthermore, since $\{\alpha_\lambda (H_m)\}_{\lambda \in \Lambda;\, m=1,2,\dots,M}$ is a frame for $V_{\bf S}^2$
the convergence of the series in the Hilbert-Schmidt norm is unconditional. Notice that 
$\mathcal{T}_{\sigma_S}(T_\lambda \mathbf{b}_m)=T_\lambda(\mathcal{T}_{\sigma_S}\mathbf{b}_m)=T_\lambda(h_m)$, where the same symbol $T_\lambda$ denotes both the translation by $\lambda$ in $\ell^2_{_N}(\Lambda)$ and in $L^2(\mathbb{R}^{2d})$ respectively. 
Notice that if $\mathbf{b}_m=\big(b_{1,m}(\lambda), b_{2,m}(\lambda), \dots, b_{N,m}(\lambda)\big)^\top$, then 
\[
H_m=\sum_{n=1}^N \sum_{\lambda \in \Lambda} b_{n,m}(\lambda)\, \alpha_\lambda(S_n)\,, \quad m=1,2,\dots,M\,.
\]
Since convergence in $\mathcal{H}\mathcal{S}$-norm implies convergence in operator norm we deduce the pointwise expansion for each $f\in L^2(\mathbb{R}^d)$.
\end{proof}

Observe that, due to conditions \eqref{frame} in Theorem \ref{tsamp1} we have necessarily $M\geq N$. Whenever $M>N$, there are infinite dual frames $\big\{T_\lambda\,\mathbf{b}_{m}\big\}_{\lambda \in \Lambda;\, m=1,2,\dots,M}$ of  
$\big\{T_\lambda\,\mathbf{a}^*_{m}\big\}_{\lambda \in \Lambda;\, m=1,2,\dots,M}$ given by the samples \eqref{samples2}. They are obtained  from the left-inverses $\widehat{B}(\xi)$ of $\widehat{A}(\xi)$ which are deduced, from the Moore-Penrose pseudo-inverse 
$\widehat{A}(\xi)^\dag$, as the $N\times M$ matrices 
\[
\widehat{B}(\xi):=\widehat{A}(\xi)^\dag+C(\xi)\big[\mathbb{I}_{_M}-\widehat{A}(\xi)\widehat{A}(\xi)^\dag\big]\,, \text{\, \,a.e. $\xi \in \widehat{\Lambda}$}\,, 
\]
where $C$ denotes any $N\times M$ matrix with entries in $L^\infty(\widehat{\Lambda})$.

More can be said in case $M=N$:
\begin{cor}
In case $M=N$, assume that the conditions
\begin{equation}
\label{riesz}
0<\einf_{\xi\in \widehat{\Lambda}} \big|\det [\widehat{A}(\xi)]\big| \le \esup_{\xi\in \widehat{\Lambda}} \big|\det [\widehat{A}(\xi)]\big|<+\infty
\end{equation}
hold. Then, there exist $N$ unique elements $H_n$, $n=1,2,\dots, N$, in $V_{\bf S}^2$ such that the associated sequence
$\big\{\alpha_\lambda(H_{n})\big\}_{\lambda\in \Lambda;\,n=1,2,\dots, N}$ is a Riesz basis for $V_{\bf S}^2$ and the sampling formula
\[
T=\sum_{n=1}^N\sum_{\lambda \in \Lambda} s_{_{T,n}}(\lambda)\, \alpha_\lambda(H_n)\quad \text{in $\mathcal{H}\mathcal{S}$-norm}
\]      
holds for each $T\in V_{\bf S}^2$.
Moreover, the interpolation property $\big\langle \alpha_{-\lambda}(H_m) g_n, \widetilde{g}_n \big\rangle=\delta_{m,n}\delta_{\lambda,0}$, where $\lambda \in \Lambda $ and $m,n=1,2,\dots,N$, holds.
\end{cor}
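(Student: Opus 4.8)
The plan is to specialize the frame-theoretic machinery behind Theorem \ref{tsamp1} to the square case $M=N$, where the underlying frame of translates becomes a Riesz basis. First I would show that condition \eqref{riesz} is exactly the statement that $\widehat{A}(\xi)$ is invertible a.e.\ with an inverse whose entries lie in $L^\infty(\widehat{\Lambda})$. Since the entries of $\widehat{A}$ already belong to $L^\infty(\widehat{\Lambda})$, both $\det\widehat{A}(\xi)$ and every cofactor are automatically bounded; thus the upper bound in \eqref{riesz} is automatic and the essential hypothesis is the lower bound $\einf_\xi|\det\widehat{A}(\xi)|>0$. Writing $\widehat{A}(\xi)^{-1}=\operatorname{adj}\big(\widehat{A}(\xi)\big)/\det\widehat{A}(\xi)$ then exhibits the inverse with $L^\infty$ entries. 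The same bounds force \eqref{frame}: using $\det\big(\widehat{A}(\xi)^*\widehat{A}(\xi)\big)=|\det\widehat{A}(\xi)|^2$ and ${\rm tr}\big(\widehat{A}(\xi)^*\widehat{A}(\xi)\big)=\sum_{m,n}|\mathcal{F}_s^\Lambda(a_{m,n})(\xi)|^2$, one bounds $\lambda_{\text{max}}$ from above by the $L^\infty$ norms of the entries and $\lambda_{\text{min}}$ from below by $\einf_\xi|\det\widehat{A}(\xi)|^2$ divided by a power of that bound. In particular all the hypotheses of Theorem \ref{tsamp1} are in force.

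Next I would upgrade the frame to a Riesz basis. With $M=N$ the matrix $\widehat{A}(\xi)$ is square and invertible a.e.\ with bounded inverse, so $\big\{T_\lambda\,\mathbf{a}^*_{m}\big\}_{\lambda\in\Lambda;\,m=1,\dots,N}$ is not merely a frame but a Riesz basis for $\ell^2_{_N}(\Lambda)$. Moreover the left-inverse is now unique: the correction term $\mathbb{I}_{_M}-\widehat{A}(\xi)\widehat{A}(\xi)^\dag$ vanishes because $\widehat{A}(\xi)$ is invertible, so the Moore--Penrose pseudo-inverse reduces to $\widehat{B}(\xi)=\widehat{A}(\xi)^{-1}$. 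This produces the unique biorthogonal dual Riesz basis $\big\{T_\lambda\,\mathbf{b}_{m}\big\}_{\lambda\in\Lambda;\,m=1,\dots,N}$ with the same translation structure.

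Then I would transfer everything through the isomorphism $\mathcal{T}_{\bf S}$ of \eqref{isos}, exactly as in the proof of Theorem \ref{tsamp1}. Setting $H_n=K_{h_n}$ with $h_n=\mathcal{T}_{\sigma_{\bf S}}(\mathbf{b}_n)$, the fact that $\mathcal{T}_{\bf S}$ is an isomorphism carries the Riesz basis $\big\{T_\lambda\,\mathbf{b}_{n}\big\}$ to a Riesz basis $\big\{\alpha_\lambda(H_n)\big\}_{\lambda\in\Lambda;\,n=1,\dots,N}$ for $V_{\bf S}^2$; uniqueness of the $H_n$ is inherited from uniqueness of the dual Riesz basis. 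Applying $\mathcal{T}_{\bf S}$ to the (now unique) expansion \eqref{frameexpansionc} yields the stated sampling formula, which specializes \eqref{fsamp1}.

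The interpolation property is where the Riesz-basis uniqueness is decisive, and it is the one step I would single out as requiring a moment of care. Since $\big\{\alpha_\lambda(H_n)\big\}$ is a Riesz basis, the expansion of any $T\in V_{\bf S}^2$ in this basis has unique coefficients, and by the sampling formula those coefficients are precisely the samples $s_{_{T,n}}(\lambda)$. Choosing $T=H_m$ and invoking that the coefficients of a Riesz basis element are the Kronecker deltas, I read off $s_{_{H_m,n}}(\lambda)=\big\langle\alpha_{-\lambda}(H_m)g_n,\widetilde{g}_n\big\rangle=\delta_{m,n}\,\delta_{\lambda,0}$, which is exactly the claimed interpolation identity. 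Apart from the elementary spectral estimate linking \eqref{riesz} to \eqref{frame}, every step is a direct specialization of the general square case.
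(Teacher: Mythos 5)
Your proposal is correct and follows essentially the same route as the paper: identify \eqref{riesz} (in the square case, with $L^\infty$ entries) as equivalent to \eqref{frame}, upgrade the frame of translates to a Riesz basis whose dual is unique (the pseudo-inverse collapsing to $\widehat{A}(\xi)^{-1}$), transfer through $\mathcal{T}_{\bf S}$, and read off the interpolation property from uniqueness of Riesz-basis coefficients. The only difference is that you spell out the spectral estimates behind the equivalence of \eqref{riesz} and \eqref{frame}, which the paper simply asserts.
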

\begin{proof}
In this case, the square matrix $\widehat{A}(\xi)$ is invertible and the statement \eqref{riesz} in corollary  is equivalent to condition $0<\alpha_A \le \beta_A <+\infty$ in \eqref{frame}; besides,  
any Riesz basis has a unique dual basis. The uniqueness of the coefficients in a Riesz basis expansion gives the interpolation property.
\end{proof}

In particular, for the case $N=M=1$ we have:
\begin{cor}
Assume that the sequence $\mathbf{a}=\{a(\lambda)\}_{\lambda \in \Lambda}$, where $a(\lambda)=\big\langle \alpha_{-\lambda}(S) g, \widetilde{g}\, \big\rangle_{L^2(\mathbb{R}^d)}$, \,$\lambda \in \Lambda$, for a fixed pair of functions $g, \widetilde{g}\in L^2(\mathbb{R}^d)$,  satisfies the conditions
\begin{equation}
\label{baseriesz}
0<\einf_{\xi \in \widehat{\Lambda}} |\mathcal{F}_s^\Lambda (\mathbf{a})(\xi)| \le \esup_{\xi \in \widehat{\Lambda}} |\mathcal{F}_s^\Lambda(\mathbf{a})(\xi)| <\infty\,.
\end{equation}
Then, there exists a unique $H\in V^2_{_S}$ such that the sequence $\big\{\alpha_\lambda(H)\big\}_{\lambda\in \Lambda}$ is a Riesz basis for $V_{_S}^2$ and the sampling formula
\[
T=\sum_{\lambda \in \Lambda} \big\langle \alpha_{-\lambda}(T) g, \widetilde{g}\, \big\rangle_{L^2(\mathbb{R}^d)}\, \alpha_\lambda(H)\quad \text{in $\mathcal{H}\mathcal{S}$-norm}
\]
holds for each $T\in V_{_S}^2$.
Moreover, the interpolation property $\big\langle \alpha_{-\lambda}(H) g, \widetilde{g}\, \big\rangle=\delta_{\lambda,0}$, $\lambda \in \Lambda $, holds; in particular, 
$\big\langle Hg, \widetilde{g}\, \big\rangle=1$.
\end{cor}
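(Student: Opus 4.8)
The plan is to recognize this statement as the special case $N=M=1$ of the preceding Corollary and to verify that, in this case, hypothesis \eqref{riesz} collapses to exactly \eqref{baseriesz}. First I would observe that with a single generator $S$ and a single pair $g,\widetilde{g}$, the matrix $A=[a_{m,n}]$ of Theorem~\ref{tsamp1} reduces to the $1\times 1$ matrix whose unique entry is the sequence $\mathbf{a}=\{a(\lambda)\}_{\lambda\in\Lambda}$ with $a(\lambda)=\langle \alpha_{-\lambda}(S)g,\widetilde{g}\rangle_{L^2(\mathbb{R}^d)}$. Consequently the symbol matrix $\widehat{A}(\xi)$ is the scalar-valued function $\mathcal{F}_s^\Lambda(\mathbf{a})(\xi)$, and its determinant is $\mathcal{F}_s^\Lambda(\mathbf{a})(\xi)$ itself. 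Therefore the determinant condition \eqref{riesz} reads $0<\einf_{\xi\in\widehat{\Lambda}}|\mathcal{F}_s^\Lambda(\mathbf{a})(\xi)|\le \esup_{\xi\in\widehat{\Lambda}}|\mathcal{F}_s^\Lambda(\mathbf{a})(\xi)|<\infty$, which is precisely \eqref{baseriesz}. (Equivalently, one checks directly that the scalar convolution operator $c\mapsto \mathbf{a}\ast_\Lambda c$ on $\ell^2(\Lambda)$ is boundedly invertible if and only if \eqref{baseriesz} holds.)

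With the hypotheses matched, I would invoke the preceding Corollary directly: it furnishes a unique operator $H\in V_{_S}^2$ (the single $H_1$ of that Corollary) for which $\{\alpha_\lambda(H)\}_{\lambda\in\Lambda}$ is a Riesz basis of $V_{_S}^2$ and for which the stated sampling formula holds for every $T\in V_{_S}^2$, with convergence in $\mathcal{H}\mathcal{S}$-norm. The interpolation identity of that Corollary, namely $\langle \alpha_{-\lambda}(H_m)g_n,\widetilde{g}_n\rangle=\delta_{m,n}\delta_{\lambda,0}$, specializes at $m=n=1$ to $\langle\alpha_{-\lambda}(H)g,\widetilde{g}\rangle=\delta_{\lambda,0}$ for all $\lambda\in\Lambda$. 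Finally, evaluating this identity at $\lambda=0$ and using that $\pi(0)$ is the identity on $L^2(\mathbb{R}^d)$, so that $\alpha_0(H)=\pi(0)H\pi(0)^*=H$, yields $\langle Hg,\widetilde{g}\rangle=1$.

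Since everything is a direct specialization, there is no genuine obstacle; the only point meriting a line of justification is the reduction of the $1\times 1$ determinant condition to \eqref{baseriesz}, which is immediate. A fully self-contained route, bypassing the matrix machinery, would instead transport a single dual sequence of $\{T_\lambda \mathbf{a}^*\}_{\lambda\in\Lambda}$ through the isomorphism $\mathcal{T}_{\bf S}$ of Eq.~\eqref{isos} to produce $H$ and the expansion \eqref{frameexpansionc}; but reusing the preceding Corollary is by far the most economical presentation.
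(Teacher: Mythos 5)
Your proposal is correct and matches the paper's own treatment: the paper presents this corollary precisely as the particular case $N=M=1$ of the preceding Corollary, where the $1\times 1$ determinant condition \eqref{riesz} reduces to \eqref{baseriesz}, and the interpolation property together with $\big\langle Hg, \widetilde{g}\,\big\rangle=1$ follows by specializing and evaluating at $\lambda=0$ exactly as you do.
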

It is worth to remark that  in the above sampling result is not necessary that the operators in $V_{_S}^2$ have a bandlimited Kohn-Nirenberg symbol as in 
Ref.~\cite[Theorem 2]{grochenig:14}.

\medskip

The bandlimited case is obtained as a particular case. Let $\Lambda=a\mathbb{Z}^d \times b\mathbb{Z}^d$ be a lattice in $\mathbb{R}^{2d}$ with $a,b>0$. Assume that the generator $S$ of $V^2_{_S}$ is a bandlimited operator to $Q:=[\frac{-1}{2a}, \frac{1}{2a}]^d\times [\frac{-1}{2b}, \frac{1}{2b}]^d$, i.e., it belongs to $OPW^2(Q):=\{T\in \mathcal{H}\mathcal{S}(\mathbb{R}^d) \,:\, \text{supp\,} \widehat{\sigma}_{_T} \subseteq Q\}$. Then any $T\in V^2_{_S}$ also belongs to $OPW^2(Q)$. In case conditions \eqref{baseriesz} are satisfied, any $T\in V^2_{_S}$ can be recovered from its diagonal channel samples as
\[
T=\sum_{\lambda \in \Lambda} \big\langle T\pi(\lambda)g, \pi(\lambda)\widetilde{g}\,\big\rangle_{L^2(\mathbb{R}^d)}\, \alpha_\lambda(H)\quad \text{in $\mathcal{H}\mathcal{S}$-norm}\,,
\]
where $H=\sum_{\lambda \in \Lambda} b(\lambda)\,\alpha_\lambda(S)$ in $V^2_{_S}$ is obtained from the sequence $\mathbf{b}=\{b(\lambda)\}_{\lambda \in \Lambda}$ in $\ell^2(\Lambda)$ such that $\mathcal{F}_s^\Lambda (\mathbf{b})(\xi)\,\mathcal{F}_s^\Lambda (\mathbf{a})(\xi)=1$, a.e. $\xi \in \widehat{\Lambda}$.

\medskip

In Ref.~\cite{grochenig:14}  the reconstruction of pseudodifferential operators with a bandlimited Kohn-Nirenberg symbol is considered. In particular, Theorem 2 of the same reference proves that, under some appropriate assumptions, for any $T\in OPW^2(Q)$ we have
\[
\sigma_{_T}=\frac{1}{(ab)^d} \sum_{\lambda \in \Lambda} \big\langle T\pi(\lambda)g, \pi(\lambda)g\,\big\rangle_{L^2(\mathbb{R}^d)}\, T_\lambda \big({\rm sinc}_{a,b}\ast k \big)\quad \text{in  $L^2(\mathbb{R}^{2d})$}\,,
\]
where the function $k$, independent of $T$, belongs to $L^1(\mathbb{R}^{2d})$ and $\rm{sinc}_{a,b}$ denotes the {\em sinc function} adapted to the lattice 
$\Lambda=a\mathbb{Z}^d \times b\mathbb{Z}^d$, namely 
\[
{\rm sinc}_{a,b}(x)=\prod_{j=1}^d \dfrac{\sin \pi a x_j}{\pi a x_j} \prod_{j=d+1}^{2d} \dfrac{\sin \pi b x_j}{\pi b x_j}\,, \quad x\in \mathbb{R}^{2d}\,.
\]
Using the Kohn-Nirenberg transfom, the above sampling formula for $\sigma_{_T}$ can be written as 
\[
T=\frac{1}{(ab)^d} \sum_{\lambda \in \Lambda} \big\langle T\pi(\lambda)g, \pi(\lambda)g\,\big\rangle_{L^2(\mathbb{R}^d)}\, \alpha_\lambda (k\ast K_{\rm{sinc}_{a,b}} )\quad \text{in $\mathcal{H}\mathcal{S}$-norm}\,,
\]
where $k\ast K_{{\rm sinc}_{a,b}}$ denotes the Hilbert-Schmidt operator obtained from the convolution of the function $k$ and the operator $K_{{\rm sinc}_{a,b}}$; we have also used the following result:

\begin{lema}
Let $K_f$ be an operator in $\mathcal{H}\mathcal{S}(\mathbb{R}^d)$ with Kohn-Nirenberg symbol $f\in L^2(\mathbb{R}^{2d})$, and let $g$ a function in $L^1(\mathbb{R}^{2d})$. Then we have that $K_{g\ast f}=g\ast K_f$.
\end{lema}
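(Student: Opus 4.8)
The plan is to reduce the operator identity to an ordinary convolution of functions in $L^2(\mathbb{R}^{2d})$ by exploiting the intertwining property $K_{T_zf}=\alpha_z(K_f)$ of the Kohn--Nirenberg transform. First I would make explicit the definition being used: for $g\in L^1(\mathbb{R}^{2d})$ and $T\in\mathcal{H}\mathcal{S}(\mathbb{R}^d)$, the convolution of the function with the operator is the Bochner integral
\[
g\ast T:=\int_{\mathbb{R}^{2d}} g(z)\,\alpha_z(T)\,dz
\]
in $\mathcal{H}\mathcal{S}(\mathbb{R}^d)$. This integral is well defined: the map $z\mapsto\alpha_z(T)$ is continuous (as recalled among the preliminaries) and hence strongly measurable, and since each $\alpha_z$ is unitary one has $\int_{\mathbb{R}^{2d}}\|g(z)\,\alpha_z(T)\|_{\mathcal{H}\mathcal{S}}\,dz=\|T\|_{\mathcal{H}\mathcal{S}}\,\|g\|_{L^1}<\infty$.

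Next I would apply this with $T=K_f$ and insert $\alpha_z(K_f)=K_{T_zf}$, so that $g\ast K_f=\int_{\mathbb{R}^{2d}} g(z)\,K_{T_zf}\,dz$. Because $f\mapsto K_f$ is a bounded (indeed unitary) linear operator from $L^2(\mathbb{R}^{2d})$ onto $\mathcal{H}\mathcal{S}(\mathbb{R}^d)$, it commutes with the Bochner integral, giving $g\ast K_f=K_{F}$ with $F:=\int_{\mathbb{R}^{2d}} g(z)\,T_zf\,dz\in L^2(\mathbb{R}^{2d})$; this last integral converges since, by Young's inequality, $\|g\ast f\|_{L^2}\le\|g\|_{L^1}\|f\|_{L^2}$ and the integrand has $\int\|g(z)T_zf\|_{L^2}\,dz=\|g\|_{L^1}\|f\|_{L^2}<\infty$. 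It then remains to identify $F$ with the classical convolution $g\ast f$: testing against an arbitrary $h\in L^2(\mathbb{R}^{2d})$ and applying Fubini (legitimate because $\iint|g(z)||f(w-z)||h(w)|\,dw\,dz<\infty$) gives $\langle F,h\rangle_{L^2(\mathbb{R}^{2d})}=\int g(z)\langle T_zf,h\rangle_{L^2(\mathbb{R}^{2d})}\,dz=\langle g\ast f,h\rangle_{L^2(\mathbb{R}^{2d})}$, whence $F=g\ast f$ and therefore $g\ast K_f=K_{g\ast f}$.

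The only delicate points are the two interchanges, and the main obstacle is justifying that the bounded operator $K$ may be moved inside the Bochner integral; this is the standard fact that bounded linear maps commute with Bochner integrals, applicable once the integrand $z\mapsto g(z)K_{T_zf}$ is recognised as Bochner integrable via the continuity and unitarity bound above. An alternative that avoids operator-valued integration altogether is to verify the identity weakly through \eqref{KN}: for $\phi,\psi\in L^2(\mathbb{R}^d)$ one computes $\big\langle (g\ast K_f)\phi,\psi\big\rangle_{L^2(\mathbb{R}^d)}=\int g(z)\big\langle K_{T_zf}\phi,\psi\big\rangle_{L^2(\mathbb{R}^d)}\,dz=\int g(z)\big\langle T_zf,R(\psi,\phi)\big\rangle_{L^2(\mathbb{R}^{2d})}\,dz=\big\langle g\ast f,R(\psi,\phi)\big\rangle_{L^2(\mathbb{R}^{2d})}=\big\langle K_{g\ast f}\phi,\psi\big\rangle_{L^2(\mathbb{R}^d)}$, the middle equality being again a Fubini argument. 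Since $\phi,\psi$ are arbitrary, this yields the operator identity directly.
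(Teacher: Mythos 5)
Your proposal is correct and follows essentially the same route as the paper: both insert the intertwining relation $\alpha_z(K_f)=K_{T_zf}$ into the operator-valued integral defining $g\ast K_f$, move the unitary Kohn--Nirenberg correspondence through the integral, and identify the resulting vector-valued integral in $L^2(\mathbb{R}^{2d})$ with the classical convolution $g\ast f$. The only difference is bookkeeping: where the paper invokes \cite[Proposition 2.4]{luef:18} to commute a bounded operator with the (weakly defined) integral, you justify Bochner integrability and the Fubini interchanges by hand, and your alternative weak-form argument via the Rihaczek distribution $R(\psi,\phi)$ is the same computation carried out against test functions.
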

\begin{proof}
Recall that the convolution $g\ast K_f$ is the operator in $\mathcal{H}\mathcal{S}(\mathbb{R}^d)$ defined by the operator-valued integral (in weak sense)
\[
g\ast K_f=\int_{\mathbb{R}^{2d}} g(z)\, \alpha_z(K_f) dz\,,
\]
i.e.,
\[
\Big\langle \big(\int_{\mathbb{R}^{2d}} g(z)\, \alpha_z(K_f) dz\big)\varphi, \psi\Big\rangle_{L^2(\mathbb{R}^d)}=\int_{\mathbb{R}^{2d}} g(z)\, \langle \alpha_z(K_f)\varphi, \psi \rangle dz\,,\quad \varphi, \psi \in L^2(\mathbb{R}^d)\,.
\]
See the details in Refs.~\cite{luef:18,skret:20,werner:84}. Since the map $\mathcal{K}: \mathcal{H}\mathcal{S}(\mathbb{R}^d) \rightarrow L^2(\mathbb{R}^{2d})$ such that $\mathcal{K}(K_f)=f $ is a unitary operator and bounded operators conmute with convergent integrals \cite[Proposition 2.4]{luef:18} we get
\[
\mathcal{K}(g\ast K_f)=\int_{\mathbb{R}^{2d}} g(z)\,\mathcal{K}\big(\alpha_z(K_f)\big)dz=\int_{\mathbb{R}^{2d}} g(z)\,\mathcal{K}\big(K_{T_zf}\big)dz=
\int_{\mathbb{R}^{2d}} g(z)\,f(\cdot-z)dz=g\ast f\,,
\]
that is, $K_{g\ast f}=g\ast K_f$.
\end{proof}

\bigskip

In the same manner we can consider {\em average sampling} in $V_{\bf S}^2$. Namely, for any $T\in V_{\bf S}^2$, its {\em average samples} at $\Lambda$ are defined by
\[
\big\langle T, \alpha_\lambda(Q_m)\big \rangle_{_{\mathcal{H}\mathcal{S}}}\,,\quad \lambda \in \Lambda\,,\,\,\, m=1, 2, \dots,M\,,
\]
from $M$ fixed operators $Q_1, Q_2, \dots, Q_M$ in $\mathcal{H}\mathcal{S}(\mathbb{R}^d)$, not necessarily in $V_{\bf S}^2$. Observe that, having in mind Eq.~\eqref{eledos} in Lemma \ref{lemmasamples}, the diagonal channel samples defined in Eq.~\eqref{dchsamp} are a particular case of average sampling where $Q_m=\widetilde{g}_m\otimes g_m$, $m=1, 2, \dots, M$. The average samples of any $T=\sum_{n=1}^N\sum_{\mu \in \Lambda} c_n(\mu)\, \alpha_\mu(S_n)$ can be also expressed as a discrete convolution system in $\ell^2_{_N}(\Lambda)$. Indeed, for $m=1, 2, \dots, M$ we have
\[
\begin{split}
\big\langle T, \alpha_\lambda(Q_m) \big\rangle_{_{\mathcal{H}\mathcal{S}}} = & \big\langle \sigma_{_T}, T_\lambda \sigma_{_{Q_m}} \big\rangle_{_{L^2(\mathbb{R}^{2d})}}=
\big\langle \sum_{n=1}^N\sum_{\mu \in \Lambda} c_n(\mu)\, T_{\mu}  \sigma_{_{S_n}}, T_\lambda \sigma_{_{Q_m}} \big\rangle_{_{L^2(\mathbb{R}^{2d})}} \\
=&\sum_{n=1}^N\sum_{\mu \in \Lambda} c_n(\mu) \langle T_{\mu}  \sigma_{_{S_n}}, T_\lambda \sigma_{_{Q_m}} \rangle_{_{L^2(\mathbb{R}^{2d})}} =\sum_{n=1}^N\sum_{\mu \in \Lambda} c_n(\mu) \langle \sigma_{_{S_n}}, T_{\lambda-\mu} \sigma_{_{Q_m}} \rangle_{_{L^2(\mathbb{R}^{2d})}} \\
=&\sum_{n=1}^N \big(a_{m,n} \ast_\Lambda c_n\big)(\lambda)\,,\quad \lambda \in \Lambda\,,
\end{split}
\]
where $a_{m,n}(\mu):=\big\langle \sigma_{_{S_n}}, T_\mu \sigma_{_{Q_m}} \big\rangle_{_{L^2(\mathbb{R}^{2d})}}=\big\langle S_n, \alpha_\mu(Q_m)\big \rangle_{_{\mathcal{H}\mathcal{S}}}$, \,$\mu \in \Lambda$, and 
$\sigma_{_{S_n}}$, $\sigma_{_{Q_m}}$ are the Kohn-Nirenberg symbols of $S_n$, $Q_m$ respectively. 

Observe that, for each $m=1, 2, \dots, M$ and $n=1, 2, \dots, N$, the sequence $\{a_{m,n}(\lambda)\}_{\lambda \in \Lambda}$ belongs to $\ell^2(\Lambda)$ since, in particular, $\{T_\lambda \sigma_{_{S_n}}\}_{\lambda \in \Lambda;\, n=1,2,\dots,N}$ is a Bessel sequence in $L^2(\mathbb{R}^{2d})$. 

\begin{cor}
Assume that the matrix $A=[a_{m,n}]$ with entries $a_{m,n}(\lambda)=\big\langle S_n, \alpha_\lambda(Q_m)\big \rangle_{_{\mathcal{H}\mathcal{S}}}$, \,$\lambda \in \Lambda$, satisfies conditions in \eqref{frame}. Then,  there exist $M\geq N$ operators $H_m\in V_{\bf S}^2$, $m=1,2,\dots,M$, such that the sampling formula 
\[
T=\sum_{m=1}^M\sum_{\lambda \in \Lambda} \big\langle T, \alpha_\lambda(Q_m)\big \rangle_{_{\mathcal{H}\mathcal{S}}} \, \alpha_\lambda(H_m)\quad \text{in $\mathcal{H}\mathcal{S}$-norm}
\]
holds for each $T\in V_{\bf S}^2$ where $\{\alpha_\lambda (H_m)\}_{\lambda \in \Lambda;\, m=1,2,\dots,M}$ is a frame for $V_{\bf S}^2$. The convergence of the series is unconditional in Hilbert-Schmidt norm.
\end{cor}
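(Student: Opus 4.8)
The plan is to reduce the statement to Theorem~\ref{tsamp1} by observing that the average samples share exactly the discrete-convolution structure of the diagonal channel samples. First I would recall the computation carried out just before the statement: for $T=\sum_{n=1}^N\sum_{\mu\in\Lambda}c_n(\mu)\,\alpha_\mu(S_n)\in V_{\bf S}^2$ the average samples satisfy
\[
\big\langle T,\alpha_\lambda(Q_m)\big\rangle_{_{\mathcal{H}\mathcal{S}}}=\sum_{n=1}^N\big(a_{m,n}\ast_\Lambda c_n\big)(\lambda),\qquad \lambda\in\Lambda,
\]
with $a_{m,n}(\mu)=\langle S_n,\alpha_\mu(Q_m)\rangle_{_{\mathcal{H}\mathcal{S}}}$, and that each $\{a_{m,n}(\lambda)\}_{\lambda\in\Lambda}$ lies in $\ell^2(\Lambda)$ because $\{T_\lambda\sigma_{_{S_n}}\}$ is Bessel. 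Thus the sampling map $T\mapsto\mathbf{s}_{_T}$ is again the bounded convolution system $\mathcal{A}:\ell^2_{_N}(\Lambda)\to\ell^2_{_M}(\Lambda)$, $\mathbf{c}\mapsto A\ast_\Lambda\mathbf{c}$, with the new matrix $A=[a_{m,n}]$; boundedness follows from the upper bound in \eqref{frame}, which forces the entries of $\widehat{A}$ into $L^\infty(\widehat{\Lambda})$.

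Next I would identify $\mathcal{A}$ with the analysis operator of $\{T_\lambda\mathbf{a}^*_m\}_{\lambda\in\Lambda;\,m=1,\dots,M}$ in $\ell^2_{_N}(\Lambda)$, exactly as in Section~\ref{section3.3}, so that $[A\ast_\Lambda\mathbf{c}]_m(\lambda)=\langle\mathbf{c},T_\lambda\mathbf{a}^*_m\rangle_{\ell^2_{_N}(\Lambda)}=\langle T,\alpha_\lambda(Q_m)\rangle_{_{\mathcal{H}\mathcal{S}}}$. Condition~\eqref{frame} then guarantees that this analysis operator is injective with closed range, i.e.\ that $\{T_\lambda\mathbf{a}^*_m\}$ is a frame for $\ell^2_{_N}(\Lambda)$. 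I would then pick a dual frame $\{T_\lambda\mathbf{b}_m\}$ of the same translation structure, obtained from any left-inverse $\widehat{B}(\xi)$ of $\widehat{A}(\xi)$---for instance the Moore-Penrose pseudo-inverse---giving the expansion $\mathbf{c}=\sum_{m=1}^M\sum_{\lambda\in\Lambda}\langle\mathbf{c},T_\lambda\mathbf{a}^*_m\rangle\,T_\lambda\mathbf{b}_m$ in $\ell^2_{_N}(\Lambda)$.

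Finally I would transport this identity to $V_{\bf S}^2$ through the isomorphism $\mathcal{T}_{\bf S}$ of \eqref{isos}. Writing $H_m:=\mathcal{T}_{\bf S}(\mathbf{b}_m)=K_{h_m}$ with $h_m=\mathcal{T}_{\sigma_{\bf S}}(\mathbf{b}_m)$, and using $\mathcal{T}_{\bf S}(T_\lambda\mathbf{b}_m)=\alpha_\lambda(H_m)$ together with $\langle\mathbf{c},T_\lambda\mathbf{a}^*_m\rangle=\langle T,\alpha_\lambda(Q_m)\rangle_{_{\mathcal{H}\mathcal{S}}}$, applying $\mathcal{T}_{\bf S}$ to the frame expansion yields the sampling formula. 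Since $\mathcal{T}_{\bf S}$ is a topological isomorphism it carries the frame $\{T_\lambda\mathbf{b}_m\}$ to a frame $\{\alpha_\lambda(H_m)\}$ of $V_{\bf S}^2$, whence the unconditional $\mathcal{H}\mathcal{S}$-convergence. I do not expect a genuine obstacle here; the only point requiring care is the very first one---checking that the average samples really collapse to the convolution form $\sum_n a_{m,n}\ast_\Lambda c_n$ with $\ell^2$ kernels---since after that the argument is word-for-word that of Theorem~\ref{tsamp1} with the single replacement of the matrix $A$.
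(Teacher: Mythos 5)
Your proposal is correct and follows exactly the paper's route: the paper establishes the corollary by expressing the average samples as the discrete convolution system $\mathbf{c}\mapsto A\ast_\Lambda\mathbf{c}$ with $a_{m,n}(\mu)=\langle S_n,\alpha_\mu(Q_m)\rangle_{_{\mathcal{H}\mathcal{S}}}$ (kernels in $\ell^2(\Lambda)$ by the Bessel property of $\{T_\lambda\sigma_{_{S_n}}\}$) and then invoking the frame-of-translates plus $\mathcal{T}_{\bf S}$ argument of Theorem~\ref{tsamp1} verbatim. No gaps; the one point you flag as needing care (the collapse to convolution form) is precisely the computation the paper performs immediately before the statement.
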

The above sampling formula was obtained in Ref.~\cite{garcia:21} by using the Weyl symbols of $S_n$ and $Q_m$ instead of their Kohn-Nirenberg symbols.
Finally, it is worth to mention that each sampling result in this section admit a kind of converse result; see the details in Theorems 1-2 and Corollary 3 of Ref.~\cite{garcia:21}.

\subsubsection*{An illustrative example}
Assume that in $V_{\bf S}^2$ we have $N$ stable generators of the form $S_n=\varphi_n \otimes \widetilde{\varphi}_n$ with $\varphi_n, \widetilde{\varphi}_n \in \mathcal{S}_0(\mathbb{R}^d)$, $n=1, 2, \dots,N$. In this regard, note that in order to apply Theorem \ref{Ngenerators} we have that $\mathcal{F}_W(\varphi_n \otimes \widetilde{\varphi}_n)(z)={\rm e}^{\pi i x\cdot \omega}\,V_{\widetilde{\varphi}_n}\varphi_n(z)$, $z=(x, \omega)\in \mathbb{R}^{2d}$ (see Ref.~\cite{skret:20}). 

\medskip

Next, for each $T\in V_{\bf S}^2$ we consider the diagonal channel samples $\big\langle T\pi(\lambda)g_m, \pi(\lambda)\widetilde{g}_m \big\rangle_{L^2(\mathbb{R}^{d})}$, $\lambda \in \Lambda$ and $m=1, 2 ,\dots M$, with $g_m, \widetilde{g}_m \in \mathcal{S}_0(\mathbb{R}^d)$. In this case, for $m=1, 2, \dots,M$ and $n=1, 2, \dots,N$, we get
\[
\begin{split}
a_{m,n}(\lambda)&=\big\langle \alpha_{-\lambda}(\varphi_n \otimes \widetilde{\varphi}_n) g_m, \widetilde{g}_m \big\rangle_{L^2(\mathbb{R}^d)}=
\big\langle \big(\varphi_n \otimes \widetilde{\varphi}_n\big)\pi(\lambda)g_m, \pi(\lambda)\widetilde{g}_m\big\rangle_{L^2(\mathbb{R}^d)}\\
&=\big\langle \langle \pi(\lambda)g_m, \widetilde{\varphi}_n\rangle \varphi_n, \pi(\lambda)\widetilde{g}_m\big\rangle_{L^2(\mathbb{R}^d)}=
\overline{V_{g_m}\widetilde{\varphi}_n(\lambda)}\, V_{\widetilde{g}_m}\varphi_n(\lambda)\,, \quad \lambda \in \Lambda\,.
\end{split}
\]
From Proposition 4.1 in Ref.~\cite{skret:20} we deduce that the sequences $\big\{a_{m,n}(\lambda)\big\}_{\lambda \in \Lambda}$ belong to $\ell^1(\Lambda)$ and, as a consequence, the entries in the transfer matrix $\widehat{A}$ are continuous functions on the compact $\widehat{\Lambda}$. In order to apply Theorem \ref{tsamp1} conditions in Eq.~\eqref{frame} reduce to
\[
\det [\widehat{A}(\xi)^*\widehat{A}(\xi)] \neq 0\quad \text{for all $\xi \in \widehat{\Lambda}$}\,.
\]
Under the above circumstances, any  $T\in V_{\bf S}^2$, which is nothing but $T=\sum_{n=1}^N \mathcal{G}_{\mathbf{c}_n}^{\widetilde{\varphi}_n, \varphi_n}$ a finite sum of Gabor multipliers, can be recovered, in a stable way, from its diagonal channel samples
$\big\langle T\pi(\lambda)g_m, \pi(\lambda)\widetilde{g}_m \big\rangle_{L^2(\mathbb{R}^{d})}$, \,$\lambda\in \Lambda$ and $m=1,2,\dots,M$.
\subsection{Sampling in a sub-lattice of $\Lambda$}
\label{section3.5}
Let $\Lambda'$ be a sub-lattice of $\Lambda$ with finite index $L$, i.e., the quotient group $\Lambda/\Lambda'$ has finite order $L$. We consider 
$\{\lambda_1, \lambda_2, \dots, \lambda_L\}$ a set of representatives of the cosets of $\Lambda'$. That is, the lattice $\Lambda$
be decomposed as
\[
\Lambda= \bigcup_{l=1}^L(\lambda_l+\Lambda')\quad \text{with}\quad (\lambda_l+\Lambda') \cap (\lambda_{l'}+\Lambda')=\varnothing \,\,\text{for $l\neq l'$}\,.
\]
Thus, the space $V_{\bf S}^2$ can be written as
\[
\begin{split}
V_{\bf S}^2&=\Big\{\sum_{n=1}^N\sum_{\lambda \in \Lambda} c_n(\lambda)\,\alpha_\lambda (S_n)\, :\,  c_n \in \ell^2(\Lambda)\Big\}=
\Big\{\sum_{n=1}^N\sum_{l=1}^L \sum_{\mu\in \Lambda'} c_n(\lambda_l+\mu)\,\alpha_{\lambda_l+\mu}(S_n)\Big\}\\
&=\Big\{\sum_{n=1}^N\sum_{l=1}^L \sum_{\mu\in \Lambda'} c_{nl}(\mu)\, \alpha_\mu (S_{nl})\, :\,  c_{nl} \in \ell^2(\Lambda')\Big\}\,,
\end{split}
\]
where $c_{nl}(\mu):=c_n(\lambda_l+\mu)$ and $S_{nl}:=\alpha_{\lambda_l}(S_n)$, and  the new index $nl$ goes  from $11$ to $NL$. As a consequence, the subspace $V_{\bf S}^2$ can be rewritten as $V_{\widetilde{\bf S}}^2$ with $NL$ generators $\widetilde{\bf S}=\{S_{nl}\}$  and coefficients $c_{nl}$ in 
$\ell^2(\Lambda')$.

\medskip

Let $T=\sum_{n=1}^N\sum_{l=1}^L \sum_{\nu\in \Lambda'} c_{nl}(\nu)\, \alpha_\nu (S_{nl})$ be in $V_{\bf S}^2$; its samples 
$\big\langle \alpha_{-\mu}(T) g_m, \widetilde{g}_m \big\rangle_{L^2(\mathbb{R}^d)}$, $\mu \in \Lambda'$, can be expressed by
\[
\begin{split}
s_{_{T,m}}(\mu):=&\big\langle \alpha_{-\mu}(T) g_m, \widetilde{g}_m \big\rangle_{L^2(\mathbb{R}^d)}= \Big\langle \sum_{n=1}^N \sum_{l=1}^L \sum_{\nu \in \Lambda'} c_{nl}(\mu)\, \alpha_{\nu-\mu}(S_{nl})g_m, \widetilde{g}_m \Big\rangle_{L^2(\mathbb{R}^d)} \\
=&\sum_{n=1}^N \sum_{l=1}^L \sum_{\nu \in \Lambda'} c_{nl}(\nu) \big\langle \alpha_{\nu-\mu}(S_{nl})g_m, \widetilde{g}_m \big\rangle_{L^2(\mathbb{R}^d)}
=\sum_{n=1}^N \sum_{l=1}^L \big(a_{m,nl} \ast_{\Lambda'} c_{nl}\big)(\mu)\,, \quad \mu \in \Lambda'\,,
\end{split}
\]
where $a_{m,nl}(\nu):=\big\langle \alpha_{-\nu}(S_{nl}) g_m, \widetilde{g}_m \big\rangle_{L^2(\mathbb{R}^d)}$, \,$\nu \in \Lambda'$. Hence, Theorem \ref{tsamp1} gives:
\begin{cor}
Let $A=[a_{m,nl}]$ be the $M\times NL$ matrix with entries 
\[
a_{m,nl}(\nu)=\big\langle \alpha_{-\nu}(S_{nl}) g_m, \widetilde{g}_m \big\rangle_{L^2(\mathbb{R}^d)}\,, \quad \nu \in \Lambda'\,,
\]
for $m=1, 2, \dots, M$ and $nl=11, 12, \dots, NL$. Assume that $A$ satisfies conditions in \eqref{frame} with respect to the dual $\widehat{\Lambda' }$. Then,  there exist $M\geq NL$ operators $H_m\in V_{\bf S}^2$, $m=1,2,\dots,M$, such that the sampling formula 
\[
T=\sum_{m=1}^M \sum_{\mu \in \Lambda'} s_{_{T,m}}(\mu) \, \alpha_\mu(H_m)\quad \text{in $\mathcal{H}\mathcal{S}$-norm}
\]
holds for each $T\in V_{\bf S}^2$ where $\{\alpha_\mu (H_m)\}_{\mu \in \Lambda';\, m=1,2,\dots,M}$ is a frame for 
$V_{\bf S}^2$. The convergence of the series is unconditional in Hilbert-Schmidt norm.
\end{cor}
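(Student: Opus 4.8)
The plan is to apply Theorem~\ref{tsamp1} directly to the reindexed description of $V_{\bf S}^2$ as the $\Lambda'$-shift-invariant subspace $V_{\widetilde{\bf S}}^2$ generated by the $NL$ operators $\widetilde{\bf S}=\{S_{nl}\}$ over the sub-lattice $\Lambda'$. The only real work is to check that this reindexing preserves every hypothesis of that theorem; once this is done, the conclusion is an immediate transcription, since the preceding computation has already exhibited the samples as a discrete convolution system over $\Lambda'$.

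First I would confirm that $\{\alpha_\mu(S_{nl})\}_{\mu\in\Lambda';\,nl=11,\dots,NL}$ is a Riesz sequence for $\mathcal{H}\mathcal{S}(\mathbb{R}^d)$, which is the standing assumption needed to invoke Theorem~\ref{tsamp1}. Since $\alpha$ is a representation and $S_{nl}=\alpha_{\lambda_l}(S_n)$, we have $\alpha_\mu(S_{nl})=\alpha_{\mu+\lambda_l}(S_n)$. The disjoint coset decomposition $\Lambda=\bigcup_{l=1}^L(\lambda_l+\Lambda')$ means that $(\mu,l)\mapsto\mu+\lambda_l$ is a bijection of $\Lambda'\times\{1,\dots,L\}$ onto $\Lambda$; hence the family $\{\alpha_\mu(S_{nl})\}_{\mu\in\Lambda';\,n,l}$ coincides, up to a relabeling of its index set, with the original Riesz sequence $\{\alpha_\lambda(S_n)\}_{\lambda\in\Lambda;\,n}$. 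Because a Riesz sequence is invariant under bijective reindexing, with unchanged lower and upper bounds, the reindexed family is again a Riesz sequence, and the two descriptions of the space agree, $V_{\widetilde{\bf S}}^2=V_{\bf S}^2$.

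Second, the samples have already been written above as the discrete convolution system $s_{_{T,m}}(\mu)=\sum_{n=1}^N\sum_{l=1}^L(a_{m,nl}\ast_{\Lambda'}c_{nl})(\mu)$, $\mu\in\Lambda'$, that is, as a map $\ell^2_{_{NL}}(\Lambda')\to\ell^2_{_M}(\Lambda')$ with matrix $A=[a_{m,nl}]$. Arguing exactly as in Lemma~\ref{lemmasamples}, the Bessel property of the reindexed Riesz sequence forces each sequence $\{a_{m,nl}(\nu)\}_{\nu\in\Lambda'}$ to lie in $\ell^2(\Lambda')$, so $A$ and its transfer matrix $\widehat{A}$ are well defined over $\widehat{\Lambda'}$. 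Thus every hypothesis of Theorem~\ref{tsamp1} is in force in the $\Lambda'$-setting, with $N$ replaced by $NL$ and $\Lambda$ by $\Lambda'$.

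Finally, the assumption that $A$ satisfies \eqref{frame} with respect to $\widehat{\Lambda'}$ is precisely the frame condition required in Theorem~\ref{tsamp1}. Applying that theorem yields $M\geq NL$ operators $H_m\in V_{\widetilde{\bf S}}^2=V_{\bf S}^2$ for which $\{\alpha_\mu(H_m)\}_{\mu\in\Lambda';\,m=1,\dots,M}$ is a frame for $V_{\bf S}^2$, together with the stated sampling formula and unconditional convergence in $\mathcal{H}\mathcal{S}$-norm. The one point genuinely requiring care is the invariance of the Riesz property under the coset reindexing of the second paragraph; every other ingredient is a direct substitution into the already-proved Theorem~\ref{tsamp1}.
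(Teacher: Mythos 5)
Your proposal is correct and follows essentially the same route as the paper: rewrite $V_{\bf S}^2$ via the coset decomposition as the $\Lambda'$-shift-invariant space $V_{\widetilde{\bf S}}^2$ with $NL$ generators $S_{nl}=\alpha_{\lambda_l}(S_n)$, express the samples as a discrete convolution system $\ell^2_{_{NL}}(\Lambda')\to\ell^2_{_M}(\Lambda')$, and invoke Theorem~\ref{tsamp1}. The only difference is that you explicitly verify the point the paper leaves implicit --- that $\{\alpha_\mu(S_{nl})\}_{\mu\in\Lambda';\,nl}$ is again a Riesz sequence, via the bijection $(\mu,l)\mapsto\mu+\lambda_l$ and the identity $\alpha_\mu\alpha_{\lambda_l}=\alpha_{\mu+\lambda_l}$ --- which is a sound and welcome addition, not a different argument.
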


\bigskip

\noindent{\bf Acknowledgments:}
The author thanks {\em Universidad Carlos III de Madrid} for granting him a sabbatical year in 2020-21.
This work has been supported by the grant MTM2017-84098-P from the Spanish {\em Ministerio de Econom\'{\i}a y Competitividad (MINECO)}.


\end{document}